\documentclass[reqno,a4paper]{amsart}

\usepackage[
  style=numeric-comp,
  sorting=nty,
  sortcites=true,
  doi=false,
  url=false,
  giveninits=true
]{biblatex}
\renewbibmacro{in:}{}
\usepackage{amsmath}
\usepackage{amsthm}
\usepackage{amssymb}

\numberwithin{equation}{section}

\theoremstyle{plain}
\newtheorem{theorem}{Theorem}[section]
\newtheorem{lemma}[theorem]{Lemma}
\newtheorem{proposition}[theorem]{Proposition}
\newtheorem{corollary}[theorem]{Corollary}
\theoremstyle{remark}

\newtheorem*{remark*}{Remark}

\newcommand{\Cstar}{\mathbb{C}^{*}}

\title[Connection formulae for a generalised Ramanujan function]
  {Connection Formulae for a Generalised Ramanujan Entire Function}
\author{Joshua Holroyd}
\date{\today}

\begin{document}

\begin{abstract}
The Ramanujan--$q$-Airy connection formula relates the convergent
series at the origin to the behaviour at infinity of the Ramanujan
entire function. We extend this connection to a one-parameter
deformation, which embeds the Ramanujan (second-order) $q$-difference operator in a family of third-order equations. By contour integral as $q$-Borel inversion, we give behaviour at infinity in terms of divergent local expansions with connection coefficients uniquely determined. Discrete $q$-Borel--Laplace summation yields a convergent,
bilateral power series representation, whose dependence on summation path reflects the $q$-Stokes phenomenon. We prove remainder estimates establishing the formal, generally divergent, expansion at infinity as an asymptotic description of the entire function.
\end{abstract}

\maketitle

\section{Introduction}

Connection problems for linear $q$-difference equations relate solutions
normalised near fixed singular points of the equation, in particular,
the origin and infinity. The connection coefficients are generally
$q$-periodic functions~\cite{Morita2013}. When a local, formal series
solution is divergent, $q$-Borel--Laplace summation can produce a
meromorphic solution with that asymptotic expansion. In discrete
summation, one chooses a \emph{$q$-spiral} of points. Different choices can yield different solutions with the same
formal expansion in a given limit; this dependence is a $q$-analogue of the Stokes
phenomenon~\cite{Adachi2019}.

The Ramanujan entire function provides an explicit example of a
connection between the origin and infinity. Its defining power series around the origin
can be related to $q$-Airy functions that describe its behaviour at
infinity. Morita~\cite{Morita2014} used a $q$-Borel--Laplace procedure with
a contour integral as its inverse transform to derive this connection.
To give analytic meaning to a divergent local solution of the same
equation, they used a second procedure whose inverse transform is a
discrete sum along a $q$-spiral. These two types of $q$-Laplace
inversion, defined in Section~\ref{sec:preliminaries}, also enter the
present construction.

Related Ramanujan and $q$-Airy functions, their generalisations, and their asymptotics are studied
in~\cite{ElGuindyIsmail2016,IsmailZhang2007}.
Ohyama~\cite{Ohyama2011} places the Ramanujan and $q$-Airy functions
within a classification of $q$-special functions of Laplace type.

Fix $0<|q|<1$ and write $p=q^2$. We consider the deformation
\begin{equation}\label{eq:FConvergent}
  \begin{aligned}
    f(\lambda;b)
    &:=\sum_{j=0}^{\infty}
      \frac{p^{j^2}(-\lambda)^j}{(1-bp^j)(p;p)_j}=\frac{1}{1-b}
      {}_1\phi_2\left(b;bp,0;p,-p\lambda\right).
  \end{aligned}
\end{equation}
Throughout, $b\notin\{1/p^{m}:m\in\mathbb Z_{\geq0}\}$, unless
meromorphic continuation in $b$ is explicitly being discussed. The series converges
locally normally away from these parameter values, so $f$ is entire
in $\lambda$ and meromorphic in $b$, with at most simple poles at the
excluded values. At $b=0$, it reduces to the Ramanujan function
$A_p(\lambda)$. For general $b$, it satisfies the homogeneous
$p$-difference equation
\begin{equation}\label{eq:homogeneous-equation}
  \begin{aligned}
    f(\lambda;b)&-(1+b)f(p\lambda;b)+(b+p\lambda)f(p^2\lambda;b)-bp\lambda f(p^3\lambda;b)=0.
  \end{aligned}
\end{equation}
This equation is third order for $b\ne0$ and reduces to the Ramanujan equation at $b=0$, which is second order.

The defining series \eqref{eq:FConvergent} also satisfies the inhomogeneous first-order relation
\begin{equation}\label{eq:first-order-extension}
  f(\lambda;b)-bf(p\lambda;b)=A_p(\lambda).
\end{equation}
Write $(\sigma_p u)(\lambda)=u(p\lambda)$ and let
$R_p:=1-\sigma_p+p\lambda\sigma_p^2$ be the Ramanujan
operator, so that $R_pA_p=0$. Applying $R_p$ to
\eqref{eq:first-order-extension} gives $R_p(1-b\sigma_p)f=0$, which is precisely \eqref{eq:homogeneous-equation}.

For each allowed $b$, the first-order relation
\eqref{eq:first-order-extension} uniquely characterises $f$ among
functions holomorphic at $\lambda=0$. Equivalently, $f$ is the solution
of \eqref{eq:homogeneous-equation} holomorphic at the origin and
normalised by $f(0;b)=1/(1-b)$; see
Proposition~\ref{prop:governing-equations}. The condition at the origin
selects this particular function from the meromorphic solutions on
$\Cstar$.

\subsection{Main results}

Our focus is the particular entire function \eqref{eq:FConvergent},
its generally divergent expansion at infinity, and its representation
by convergent series. Proposition~\ref{prop:contour-coefficients}
constructs the formal connection expression by contour inversion, providing behaviour at infinity in terms of divergent local expansions, with connection coefficients uniquely determined. Theorem~\ref{thrm:MeromorphicResummation} constructs meromorphic solutions of Equation \eqref{eq:first-order-extension}, represented by convergent, bilateral power series expansions, dependent on $q$-Borel--Laplace summation path. Corollary~\ref{cor:weighted-asymptotics} proves remainder estimates that establish the divergent formal series as an asymptotic description of said meromorphic solutions, where, in particular, the entire function $f$ corresponds to a distinguished choice of summation path.

\subsection{Outline of the article}

Section~\ref{sec:preliminaries} fixes notation and summation conventions.
Section~\ref{sec:connection} develops the formal, divergent asymptotic series at
infinity. Section~\ref{sec:resummation} constructs meromorphic solutions by $q$-Borel--Laplace summation and establishes asymptotic results.
Section~\ref{sec:specializations} then examines dependence on the
$q$-Borel--Laplace summation spiral and the deformation parameter.
Appendix~\ref{app:discrete-summation} gives the proof of
Lemma~\ref{lem:discrete-summation}, including the summability and
remainder estimates that underpin Section~\ref{sec:resummation}.
Appendix~\ref{sec:continuum} presents a formal continuum limit and
its relation to Airy functions.

\section{Notation and summation conventions}\label{sec:preliminaries}

We fix the $q$-theta and $q$-Borel--Laplace conventions, then specify the domains used for the remainder estimates.

For $0<|q|<1$ and $n\in\mathbb Z_{\geq0}$, define the usual $q$-Pochhammer symbol
\begin{equation*}
  (a;q)_n:=\prod_{k=0}^{n-1}(1-aq^k),
  \qquad
  (a;q)_\infty:=\prod_{k=0}^{\infty}(1-aq^k).
\end{equation*}
An empty product is $1$, and
$(a_1,\ldots,a_m;q)_n:=\prod_{l=1}^m(a_l;q)_n$, with the same
convention for infinite products. The $q$-theta function is then defined
\begin{equation}\label{eq:theta-definition}
  \theta_q(z):=(z;q)_\infty(q/z;q)_\infty.
\end{equation}
This function is holomorphic on $\Cstar$, has simple zeros on
$q^{\mathbb Z}$, and satisfies $\theta_q(z)=\theta_q(q/z)$ and the
first-order $q$-difference equation $\theta_q(qz)=-\theta_q(z)/z$.
Iteration gives the identity
\begin{gather*}
  \theta_q(q^kz)=(-1/z)^kq^{-k(k-1)/2}\theta_q(z),\qquad\forall k\in\mathbb{Z}.
\end{gather*}
Furthermore, Jacobi's triple product gives the convergent bilateral expansion
\begin{equation*}
  (q;q)_\infty\theta_q(z)
  =\sum_{k\in\mathbb Z}q^{k(k-1)/2}(-z)^k.
\end{equation*}

For basic hypergeometric series, we use the standard
convention~\cite{GasperRahman2004}
\begin{equation*}
  {}_r\phi_s\left(
    \begin{matrix}a_1,\ldots,a_r\\b_1,\ldots,b_s\end{matrix};q,z\right)
  :=\sum_{n=0}^{\infty}
  \frac{(a_1,\ldots,a_r;q)_n}
       {(b_1,\ldots,b_s;q)_n(q;q)_n}
  \left((-1)^nq^{n(n-1)/2}\right)^{1+s-r}z^n.
\end{equation*}
In particular, the $q$-Airy function is
\begin{equation}\label{eq:q-airy-definition}
  \operatorname{Ai}_q(t)
  :={}_1\phi_1(0;-q;q,-t)
  =\sum_{n=0}^{\infty}
    \frac{q^{n(n-1)/2}t^n}{(p;p)_n},\qquad p=q^2.
\end{equation}
Its defining series is entire and satisfies the $q$-Airy equation
\begin{equation}\label{eq:q-airy-equation}
  \operatorname{Ai}_q(q^2t)+t\operatorname{Ai}_q(qt)
  -\operatorname{Ai}_q(t)=0.
\end{equation}

\subsection{Borel and Laplace transforms}

Let $0<|r|<1$. For a formal power series $h(t)=\sum_{n\geq0}a_nt^n$,
define the $r$-Borel transformations
\begin{equation*}
  \mathcal B_r^\pm h(\tau)
  :=\sum_{n=0}^{\infty}a_nr^{\pm n(n-1)/2}\tau^n.
\end{equation*}
With $\sigma_rh(t)=h(rt)$, the operational rule is
\begin{equation*}
  \mathcal B_r^\pm(t^m\sigma_r^nh)
  =r^{\pm m(m-1)/2}\tau^m
    \sigma_r^{n\pm m}\mathcal B_r^\pm h,
  \qquad m\geq0,\quad n\in\mathbb Z.
\end{equation*}
If $\phi$ is holomorphic near the origin, its minus-type $r$-Laplace transform is
\begin{equation*}
  \mathcal L_r^-\phi(t)
  :=\frac{(r;r)_\infty}{2\pi i}
    \int_{|\tau|=\varepsilon}
      \phi(\tau)\theta_r(-t/\tau)\frac{d\tau}{\tau},
\end{equation*}
where the circle is positively oriented and lies in the domain of
holomorphy. Whenever the transformed germ and integral are defined,
$\mathcal L_r^-\mathcal B_r^-h=h$.

For a meromorphic function $\phi$, a parameter $\sigma\in\Cstar$ is
\emph{admissible} if $\sigma r^{\mathbb Z}$ avoids its poles. Values at
removable singularities are taken after continuation in the Borel
variable. The discrete $r$-Laplace transform of level one is
\begin{equation}\label{eq:discrete-laplace-definition}
  \mathcal L_{r;1}^{[\sigma]}\phi(t)
  :=\frac{1}{(r;r)_\infty}
    \sum_{k\in\mathbb Z}
      \frac{\phi(r^k\sigma)}{\theta_r(-r^k\sigma/t)},
\end{equation}
on its domain of convergence, with possible poles in $t$ on
$-\sigma r^{\mathbb Z}$. This is the discrete summation convention
of~\cite{Zhang2002}. Reindexing the
sum gives invariance under $\sigma\mapsto r\sigma$. The triple product
also gives the moment identity
\begin{equation}\label{eq:laplace-moments}
  \mathcal L_{r;1}^{[\sigma]}(s^n)(t)
  =r^{-n(n-1)/2}t^n,\qquad n\geq0.
\end{equation}

We shall, in general, drop the $r$- prefix when discussing $r$-Borel and $r$-Laplace transforms, as this is the only context at hand.

\subsection{Asymptotic domains}\label{subsec:asymtptoticDomains}

We use closed domains of the form
$\{r^ku:u\in K,\ k\in\mathbb Z_{\geq0}\}$ near $t=0$, where $K$ is a
compact subset of an annulus in $\Cstar$ that avoids the relevant pole
spirals. Thus, these domains stay a fixed multiplicative distance from
the excluded spirals. We write
\begin{equation*}
  H(t)\sim_r\sum_{n=0}^{\infty}a_nt^n,
\end{equation*}
if, on each such domain, there are $A,C>0$ such that
\begin{equation*}
  \bigl|H(t)-\sum_{n=0}^{N-1}a_nt^n\bigr|
  \leq CA^N|r|^{-N(N-1)/2}|t|^N,
\end{equation*}
for every $N\geq0$ and all sufficiently small $t$. A function with this estimate for the zero series is called $r$-Gevrey flat. For solutions with theta prefactors, we explicitly state the corresponding weighted estimate; the prefactors cannot be omitted from the error bound.
\section{Formal connection by contour inversion}\label{sec:connection}

We determine the formal expansion associated with $f$ at
$\lambda=\infty$. The first-order relation \eqref{eq:first-order-extension} determines explicit connection data after $q$-Borel transformation. The corresponding contour integral inversion is applied coefficient-wise across a divergent local expansion, associating $f$ with a particular formal solution at infinity. The relevant asymptotic remainder estimate
for $f$ is established in Section~\ref{sec:resummation}.

\subsection{Normalisation and the formal Borel expansion}

The condition at the origin selects the function whose formal
connection expression we seek.

\begin{proposition}[Normalisation at the origin]
\label{prop:governing-equations}
For each allowed $b$, the defining series \eqref{eq:FConvergent} is
the unique solution of \eqref{eq:first-order-extension} holomorphic at
$\lambda=0$. Equivalently, it is the solution of
\eqref{eq:homogeneous-equation} holomorphic there and normalised by
$f(0;b)=1/(1-b)$.
\end{proposition}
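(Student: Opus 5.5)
We argue coefficientwise on power series at $\lambda=0$. Write a candidate holomorphic solution as $g(\lambda)=\sum_{j\geq0}c_j\lambda^j$, convergent near the origin. Recall $A_p(\lambda)=\sum_{j\geq0}p^{j^2}(-\lambda)^j/(p;p)_j$; this is the $b=0$ case of \eqref{eq:FConvergent}. Substituting $g$ into \eqref{eq:first-order-extension} and comparing coefficients of $\lambda^j$ gives
\begin{equation*}
  (1-bp^j)c_j=\frac{p^{j^2}(-1)^j}{(p;p)_j},\qquad j\geq0 .
\end{equation*}
Since $b\notin p^{-\mathbb Z_{\geq0}}$, each factor $1-bp^j$ is nonzero. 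Hence every $c_j$ is uniquely determined, and it equals the coefficient in \eqref{eq:FConvergent}. This gives existence and uniqueness at once. Convergence is already known: the factor $p^{j^2}$ dominates and $1/(1-bp^j)\to1$. So the series is entire, and it solves \eqref{eq:first-order-extension} identically on $\mathbb{C}$.

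For the equivalent formulation, one direction is immediate. Applying $R_p$ to \eqref{eq:first-order-extension} shows that $f$ solves \eqref{eq:homogeneous-equation}, as noted in the introduction. Evaluating \eqref{eq:FConvergent} at $\lambda=0$ gives $f(0;b)=1/(1-b)$.

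For the converse, let $g$ be holomorphic at $0$, solve \eqref{eq:homogeneous-equation}, and satisfy $g(0)=1/(1-b)$. Again insert $g=\sum c_j\lambda^j$. The coefficient of $\lambda^j$ in \eqref{eq:homogeneous-equation} gives
\begin{equation*}
  \bigl(1-(1+b)p^j+bp^{2j}\bigr)c_j+\bigl(p\cdot p^{2(j-1)}-bp\cdot p^{3(j-1)}\bigr)c_{j-1}=0,
\end{equation*}
that is,
\begin{equation*}
  (1-p^j)(1-bp^j)\,c_j=-p^{2j-1}\bigl(1-bp^{j-1}\bigr)c_{j-1}.
\end{equation*}
At $j=0$ the left factor vanishes, so $c_0$ is free. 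For $j\geq1$ we have $(1-p^j)(1-bp^j)\neq0$, so $c_j$ is determined recursively from $c_0$. The solution space holomorphic at the origin is therefore one-dimensional, and the normalisation $c_0=1/(1-b)$ pins down $g=f$.

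As a consistency check, the explicit coefficients satisfy this recursion. We have $c_j/c_{j-1}=-p^{2j-1}(1-bp^{j-1})/((1-p^j)(1-bp^j))$, using $p^{j^2-(j-1)^2}=p^{2j-1}$.

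The only delicate point is the bookkeeping in the coefficient identity for the third-order operator, which involves the shifts $\sigma_p^2$ and $\sigma_p^3$ with the extra $\lambda$ factor. The key observation is that the indicial polynomial $(1-p^j)(1-bp^j)$ vanishes only at $j=0$ under the standing assumption on $b$. This also explains why the excluded values $b\in p^{-\mathbb Z_{\geq0}}$ are exactly the poles in $b$.
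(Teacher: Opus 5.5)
Your proof is correct. The first half, where coefficient comparison in \eqref{eq:first-order-extension} together with $1-bp^j\neq0$ forces the coefficients of \eqref{eq:FConvergent}, is exactly the paper's argument. The converse takes a different route.

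The paper never expands the third-order operator. It sets $v(\lambda):=u(\lambda)-bu(p\lambda)$ and notes that $R_pv=R_p(1-b\sigma_p)u=0$ and $v(0)=(1-b)u(0)=1$. The two-term Taylor recurrence $(1-p^j)v_j=-p^{2j-1}v_{j-1}$ then gives $v=A_p$. Finally it invokes the uniqueness already proved for \eqref{eq:first-order-extension}.

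You instead compute the Taylor recurrence of \eqref{eq:homogeneous-equation} directly. Your factorisation $(1-p^j)(1-bp^j)c_j=-p^{2j-1}(1-bp^{j-1})c_{j-1}$ is correct, and it is precisely the coefficient-level image of the operator factorisation $R_p(1-b\sigma_p)$.

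The two routes buy different things. The paper's avoids the third-order bookkeeping and shows structurally that the normalised holomorphic solution inherits \eqref{eq:first-order-extension}. Yours is self-contained and makes explicit that the holomorphic solutions form a one-dimensional space, using $f$ only for existence.

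One small caveat on your closing remark: the factor $1-bp^j$ with $j\geq1$ in the recursion only accounts for $b=p^{-j}$, $j\geq1$. The excluded value $b=1$ enters through the normalisation $c_0=1/(1-b)$, not through the recursion.
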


\begin{proof}
We first determine the holomorphic solutions of the first-order
relation. Substituting $u(\lambda)=\sum_{j\geq0}a_j\lambda^j$ into
\eqref{eq:first-order-extension} gives
\[
  (1-bp^j)a_j=(-1)^jp^{j^2}/(p;p)_j,\qquad j\geq0.
\]
For every allowed $b$, each factor $1-bp^j$ is nonzero. These
relations, therefore, give precisely the coefficients of the entire
function \eqref{eq:FConvergent}, proving existence and uniqueness.
Applying the Ramanujan operator $R_p$ to
\eqref{eq:first-order-extension} also shows that $f$ satisfies
\eqref{eq:homogeneous-equation}.

Conversely, suppose that $u$ solves \eqref{eq:homogeneous-equation},
is holomorphic at the origin, and has $u(0)=1/(1-b)$.
Then $v(\lambda):=u(\lambda)-bu(p\lambda)$ satisfies
$R_pv=0$ and $v(0)=1$. In the Taylor recurrence for $R_pv=0$,
the coefficient of degree $j\geq1$ is determined by the preceding
coefficient because $1-p^j\ne0$. Thus, the constant term determines
the holomorphic solution uniquely, and $v=A_p$.
It follows that $u$ satisfies \eqref{eq:first-order-extension}, so
the uniqueness just proved gives $u=f$.
\end{proof}

Its minus-type $p$-Borel transform is the entire function
\begin{equation}\label{eq:BorelTransform}
  \phi_0(\mu):=(\mathcal B_p^-f)(\mu)
  =\sum_{j=0}^{\infty}
    \frac{p^{j(j+1)/2}(-\mu)^j}{(1-bp^j)(p;p)_j}.
\end{equation}
The first-order identity \eqref{eq:first-order-extension} for $f$ passes directly to $\phi_0$ and
determines its normalised formal expansion at infinity.

\begin{proposition}[Normalised formal Borel expansion]
\label{prop:formal-borel-expansion}
The function $\phi_0$ satisfies
\begin{equation}\label{eq:phi0-first-order}
  \phi_0(\mu)-b\phi_0(p\mu)=(p\mu;p)_\infty.
\end{equation}
Consequently, its theta normalisation satisfies
\begin{equation}\label{eq:normalized-first-order}
  Y(\mu)+b/(p\mu)Y(p\mu)
  =1/(1/\mu;p)_\infty,
  \qquad Y(\mu):=\phi_0(\mu)/\theta_p(p\mu).
\end{equation}
This equation has a unique formal solution
$A(\mu)=\sum_{k\geq0}c_k/\mu^{k}$, with $c_0=1$ and
\begin{equation}\label{eq:ck-first-order}
  c_k+(b/p^k)c_{k-1}=1/(p;p)_k,\qquad k\geq1.
\end{equation}
\end{proposition}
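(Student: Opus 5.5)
The plan is to transport the first-order relation \eqref{eq:first-order-extension} to the Borel plane using the operational rule of the minus-type Borel transform. We then rewrite it in theta-normalised form with the functional equation of $\theta_p$, and finally read off the recurrence by comparing coefficients in $1/\mu$.

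\emph{Step 1: the Borel-plane relation.} Apply $\mathcal B_p^-$ to \eqref{eq:first-order-extension}. By the operational rule with $m=0$, $n=1$, we have $\mathcal B_p^-(\sigma_p f)=\sigma_p\mathcal B_p^- f$, so the left side becomes $\phi_0(\mu)-b\phi_0(p\mu)$. On the right, the coefficient of $\lambda^j$ in $A_p$ (the case $b=0$ of \eqref{eq:FConvergent}) is $p^{j^2}(-1)^j/(p;p)_j$. Multiplying by $p^{-j(j-1)/2}$ gives $p^{j(j+1)/2}(-1)^j/(p;p)_j$. By Euler's identity $\sum_j p^{j(j-1)/2}(-z)^j/(p;p)_j=(z;p)_\infty$ with $z=p\mu$, this series is exactly $(p\mu;p)_\infty$. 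Everything here is an identity of entire functions, since both series converge everywhere. This proves \eqref{eq:phi0-first-order}.

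\emph{Step 2: theta normalisation.} Write $\phi_0(\mu)=\theta_p(p\mu)Y(\mu)$. The relation $\theta_p(pz)=-\theta_p(z)/z$ with $z=p\mu$ gives $\theta_p(p^2\mu)=-\theta_p(p\mu)/(p\mu)$. Hence
\[
\phi_0(\mu)-b\phi_0(p\mu)=\theta_p(p\mu)\bigl[Y(\mu)+\tfrac{b}{p\mu}Y(p\mu)\bigr].
\]
By \eqref{eq:theta-definition}, $\theta_p(p\mu)=(p\mu;p)_\infty(1/\mu;p)_\infty$. Dividing \eqref{eq:phi0-first-order} by this product, away from the zeros $p^{\mathbb Z}$, yields \eqref{eq:normalized-first-order}.

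\emph{Step 3: the formal solution.} For $|\mu|>1$, the $q$-binomial (Euler) expansion gives $1/(1/\mu;p)_\infty=\sum_{k\ge0}\mu^{-k}/(p;p)_k$. We therefore treat \eqref{eq:normalized-first-order} as an identity in $\mathbb C[[1/\mu]]$. Substitute $A(\mu)=\sum_k c_k\mu^{-k}$. Then
\[
\frac{b}{p\mu}A(p\mu)=\sum_{k\ge1}b\,p^{-1}p^{-(k-1)}c_{k-1}\mu^{-k}=\sum_{k\ge1}\frac{b}{p^k}c_{k-1}\mu^{-k}.
\]
Comparing coefficients gives $c_0=1$ at order $\mu^0$ and \eqref{eq:ck-first-order} for $k\ge1$. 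The recurrence is triangular with leading coefficient $1$, so it determines every $c_k$ from $c_0$. This gives existence and uniqueness of the formal solution; no exceptional values of $b$ arise at this stage.

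The main point requiring care is bookkeeping rather than analysis. One must check the sign and power produced by the shift $\theta_p(p^2\mu)$ against $\theta_p(p\mu)$, since this is what produces the $+b/(p\mu)$ term rather than a $-b$ term. One must also be clear that uniqueness is asserted only among formal series in $1/\mu$: genuine solutions of \eqref{eq:normalized-first-order} differ by solutions of the homogeneous equation, which involve theta-type factors and are invisible at the formal level.
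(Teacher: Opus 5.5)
Your proof is correct and follows essentially the same route as the paper. The paper obtains the Borel-plane relation by subtracting the series for $\phi_0(p\mu)$ directly, which is the same computation as your operational-rule argument. It then divides by $\theta_p(p\mu)=(p\mu;p)_\infty(1/\mu;p)_\infty$ using the shift $\theta_p(p^2\mu)=-\theta_p(p\mu)/(p\mu)$, and compares coefficients against the Euler expansion of $1/(1/\mu;p)_\infty$, exactly as you do.
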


\begin{proof}
Subtracting $b\phi_0(p\mu)$ from the series for $\phi_0(\mu)$ in
\eqref{eq:BorelTransform} cancels the factor $1-bp^j$ in each
coefficient. Euler's product expansion
then gives
\[
  \phi_0(\mu)-b\phi_0(p\mu)
  =\sum_{j=0}^{\infty}\frac{p^{j(j-1)/2}(-p\mu)^j}{(p;p)_j}
  =(p\mu;p)_\infty,
\]
which is \eqref{eq:phi0-first-order}.
To obtain the equation for $Y$, divide by $\theta_p(p\mu)$ and use
\[
  \theta_p(p\mu)=(p\mu;p)_\infty(1/\mu;p)_\infty,
  \qquad \theta_p(p^2\mu)=-\theta_p(p\mu)/(p\mu).
\]
This yields \eqref{eq:normalized-first-order} as an identity of
meromorphic functions.

For the formal expansion at infinity, use
\[
  \frac{1}{(1/\mu;p)_\infty}
  =\sum_{k=0}^{\infty}\frac{1}{(p;p)_k}\mu^{-k},\qquad |\mu|>1.
\]
Substituting $A(\mu)=\sum_{k\geq0}c_k/\mu^k$ into
\eqref{eq:normalized-first-order} and comparing powers of $1/\mu$
gives $c_0=1$ and \eqref{eq:ck-first-order}. Each subsequent
coefficient is fixed by the preceding one, proving the existence and
uniqueness of the formal solution.
\end{proof}

Here, note that $c_0=1$ is not an independently chosen normalisation, rather it is forced by the inhomogeneous equation inherited from $f$. The formal Borel
expression to be inverted is $\theta_p(p\mu)A(\mu)$.

\subsection{Contour inversion and the formal connection expression}

We apply the same contour operator that recovers $f$ from $\phi_0$ to
finite truncations of its formal Borel expression. Introduce $t$ by
$\lambda=-q^3/t^2$, so that $\lambda=\infty$ corresponds to $t=0$, and
put
\[
  h(t):=\sum_{k=0}^{\infty}q^{k(k-1)/2}c_kt^k.
\]
This series is generally divergent. The following finite contour
identity determines the factors that accompany $h(t)$ and $h(-t)$ in an asymptotic description of $f$ as $\lambda\to\infty$.

\begin{proposition}[Formal connection by contour inversion]
\label{prop:contour-coefficients}
For $N\geq0$, define
\[
  \Phi_N(\mu):=\theta_p(p\mu)\sum_{k=0}^{N-1}c_k/\mu^{k}.
\]
For every $R>0$ and $\lambda=-q^3/t^2$,
\begin{align}
  \frac{(p;p)_\infty}{2\pi i}
  \int_{|\mu|=R}\Phi_N(\mu)\theta_p(-\lambda/\mu)\frac{d\mu}{\mu}
  ={}&\frac{\theta_q(t/q)}{2(-q;q)_\infty}
       \sum_{k=0}^{N-1}q^{k(k-1)/2}c_kt^k \nonumber\\
    &+\frac{\theta_q(-t/q)}{2(-q;q)_\infty}
       \sum_{k=0}^{N-1}q^{k(k-1)/2}c_k(-t)^k.
  \label{eq:finite-contour-connection}
\end{align}
Thus coefficientwise contour inversion associates $f(\lambda,b)$ with the formal expression
\begin{equation}\label{eq:FDivergent}
  \mathcal F(t;b):=\frac{\theta_q(t/q)h(t)
        +\theta_q(-t/q)h(-t)}{2(-q;q)_\infty}.
\end{equation}
Each theta-weighted term solves
\eqref{eq:homogeneous-equation} formally, with
$\lambda\mapsto p\lambda$ corresponding to $t\mapsto t/q$.
\end{proposition}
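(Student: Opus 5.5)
The plan is to reduce \eqref{eq:finite-contour-connection} to a single monomial identity and verify it by comparing Laurent coefficients. By linearity in the finite sum, it suffices to show for each fixed $k\geq0$ that
\[
  \frac{(p;p)_\infty}{2\pi i}\int_{|\mu|=R}\theta_p(p\mu)\mu^{-k}\theta_p(-\lambda/\mu)\frac{d\mu}{\mu}
  =\frac{q^{k(k-1)/2}}{2(-q;q)_\infty}\bigl(t^k\theta_q(t/q)+(-t)^k\theta_q(-t/q)\bigr).
\]
Both theta factors are holomorphic on $\Cstar$. Their Jacobi triple product expansions therefore converge absolutely and locally uniformly on the circle $|\mu|=R$, for every $R>0$. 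So the integral equals $(p;p)_\infty$ times the constant term in $\mu$ of the product of the two Laurent series. In particular the result is independent of $R$.

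The triple product gives
\[
  (p;p)_\infty\theta_p(p\mu)=\sum_n p^{n(n+1)/2}(-\mu)^n,\qquad
  (p;p)_\infty\theta_p(-\lambda/\mu)=\sum_m p^{m(m-1)/2}\lambda^m\mu^{-m}.
\]
The constant term of $\mu^{-k}$ times their product comes from the pairs with $n=m+k$. Multiplying by $(p;p)_\infty$ and dividing by $(p;p)_\infty^2$, the left side becomes
\[
  \frac{1}{(p;p)_\infty}\sum_{m\in\mathbb Z}(-1)^{m+k}p^{m^2+mk+k(k+1)/2}\lambda^m .
\]
Next substitute $p=q^2$ and $\lambda^m=(-1)^mq^{3m}t^{-2m}$. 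This yields a bilateral series in $t$ containing only the powers $t^{k-2m}$, that is, the powers of the same parity as $k$.

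On the right side, expand $(q;q)_\infty\theta_q(\pm t/q)=\sum_n q^{n(n-1)/2-n}(\mp t)^n$. In the combination $t^k\theta_q(t/q)+(-t)^k\theta_q(-t/q)$, the terms $t^{n+k}$ with $n+k$ odd cancel and those with $n+k$ even double. Writing $n+k=-2m$, i.e.\ $n=-2m-k$, leaves twice a sum over $m$ of the same shape. The constants are reconciled by $(p;p)_\infty=(q;q)_\infty(-q;q)_\infty$, which absorbs the factor $2$ and produces $(-q;q)_\infty$. The main (though routine) obstacle is the bookkeeping of exponents: one must check that $q^{2m^2+2mk+k(k+1)+3m}$ matches $q^{n(n-1)/2-n}$ at $n=-2m-k$, up to the prefactor $q^{k(k-1)/2}$, and that the signs agree. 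I would carry this out carefully, since an off-by-one in the normalisation $\lambda=-q^3/t^2$ is precisely what such a check detects. Summing over $k<N$ then gives \eqref{eq:finite-contour-connection}, and letting $N\to\infty$ coefficientwise defines $\mathcal F$ as in \eqref{eq:FDivergent}.

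For the final claim, I would verify directly that $\theta_q(\pm t/q)h(\pm t)$ formally solves \eqref{eq:homogeneous-equation}. First, $\lambda\mapsto p\lambda$ is $t^2\mapsto t^2/q^2$, so it corresponds to $t\mapsto t/q$ (or $-t/q$, which is harmless by the $\pm$ symmetry). The relation $\theta_q(z/q)=-(z/q)\,\theta_q(z)$, obtained from $\theta_q(qz)=-\theta_q(z)/z$, converts each shift of the theta factor into a monomial factor. This turns \eqref{eq:homogeneous-equation} into a $q$-difference equation for $h$ with polynomial coefficients. I would then show that this equation is equivalent, coefficientwise, to $R_p(1-b\sigma_p)$ applied to the homogeneous counterpart of \eqref{eq:ck-first-order}. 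The cleanest route is to observe that the monomial identity above intertwines multiplication by $\mu^{-1}$ on the Borel side with a fixed shift and multiplication on the $t$ side. Under this correspondence $\theta_p(p\mu)A(\mu)$, which formally satisfies the Borel image of \eqref{eq:homogeneous-equation} by Proposition~\ref{prop:formal-borel-expansion}, maps to each theta-weighted term. If that correspondence proves awkward, I would fall back on direct substitution using the recurrence \eqref{eq:ck-first-order}.
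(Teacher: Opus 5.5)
\textbf{Verdict: correct, with a different route for the final claim.} Your proof of \eqref{eq:finite-contour-connection} is essentially the paper's: both reduce to the constant term of a product of two triple-product series. The paper substitutes $s=1/\mu$ and recognises the result as $(-1)^kp^{k(k+1)/2}(p^2;p^2)_\infty\theta_{p^2}(p^{k+1}\lambda)$. It then applies the even-part identity $(q;q)_\infty[\theta_q(x)+\theta_q(-x)]=2(q^4;q^4)_\infty\theta_{q^4}(-qx^2)$ at $x=q^{k+2}/t$. Your direct coefficient comparison proves that identity inline.

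The bookkeeping you deferred does close: both sides equal $(-1)^k(p;p)_\infty^{-1}\sum_m q^{2m^2+2mk+k^2+k+3m}t^{-2m}$. There is one slip. After substituting $\lambda=-q^3/t^2$, the left side contains only the even powers $t^{-2m}$, not $t^{k-2m}$. This agrees with your correct observation that only $n+k$ even survives on the right.

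For the final claim, your main route differs from the paper's. The paper sums \eqref{eq:ck-first-order} into $h(t)+(bt/p)h(t/q)=\operatorname{Ai}_q(t)$. Multiplying by $\theta_q(t/q)$ gives $(1-b\sigma_p)[\theta_q(t/q)h(t)]=\theta_q(t/q)\operatorname{Ai}_q(t)$. The paper then kills the right side with $R_p$ via the $q$-Airy equation at $t/q^2$. Your transport argument also works, but two points must be made explicit.

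\emph{Separating the two terms.} The contour identity maps $\theta_p(p\mu)\mu^{-k}$ to the symmetric combination, not to a single term. To treat each theta-weighted term separately, you need the single-term map $\theta_p(p\mu)\mu^{-k}\mapsto q^{k(k-1)/2}t^k\theta_q(t/q)$, together with its $t\mapsto -t$ image. This map does intertwine $\sigma_p$ with $t\mapsto t/q$, and multiplication by $\mu$ with $\lambda\sigma_p$. Hence it intertwines $1-(1-p\mu)\sigma_p$ with $R_p$.

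\emph{The Borel-side annihilation.} Proposition~\ref{prop:formal-borel-expansion} only yields $(1-b\sigma_p)[\theta_p(p\mu)A(\mu)]=(p\mu;p)_\infty$. You still need $[1-(1-p\mu)\sigma_p](p\mu;p)_\infty=0$, which is the Borel form of $R_pA_p=0$. This is the counterpart of the paper's $q$-Airy step, and it is a one-line check.

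For the same reason, your fallback phrase about the homogeneous counterpart of \eqref{eq:ck-first-order} is off. The recurrence is inhomogeneous, and its inhomogeneity, which sums to $\operatorname{Ai}_q$, is exactly what $R_p$ must annihilate.

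The paper's route produces the explicit relation \eqref{eq:h-first-order}, which it reuses for $\mathcal S_\sigma$ and Theorem~\ref{thrm:MeromorphicResummation}. Yours is more structural: it shows coefficientwise inversion is compatible with the difference operators, so any formal Borel-side solution transfers.
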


\begin{proof}
We compute the contour contribution of each formal coefficient, then check the difference equation for the resulting expression.
Since $\phi_0=\mathcal B_p^-f$ is entire by \eqref{eq:BorelTransform},
the inverse-transform identity in Section~\ref{sec:preliminaries}
gives
\[
  f(\lambda;b)=\frac{(p;p)_\infty}{2\pi i}
  \int_{|\mu|=R}\phi_0(\mu)\theta_p(-\lambda/\mu)\frac{d\mu}{\mu},
\]
for every $R>0$. Apply this contour operator to the finite expression
$\Phi_N$. With $s=1/\mu$, the reversal of orientation cancels the
sign in $d\mu/\mu=-ds/s$. Since $\theta_p(p/s)=\theta_p(s)$, the
contribution of $c_k$ is $(p;p)_\infty c_k I_k(\lambda)$, where
\[
  I_k(\lambda):=\frac{1}{2\pi i}
  \int_{|s|=1/R}\theta_p(s)\theta_p(-\lambda s)s^k\frac{ds}{s}.
\]
The two theta series converge absolutely and uniformly on this
circle, so the integral extracts the constant term of their product
with $s^k$. Jacobi's triple product gives
\begin{align*}
  (p;p)_\infty^2 I_k(\lambda)
  &=(-1)^kp^{k(k+1)/2}
    \sum_{m\in\mathbb Z}p^{m(m-1)}(-p^{k+1}\lambda)^m\\
  &=(p^2;p^2)_\infty(-1)^kp^{k(k+1)/2}
    \theta_{p^2}(p^{k+1}\lambda).
\end{align*}
To express this in terms of the theta factors in
\eqref{eq:finite-contour-connection}, separate the even powers in
the theta series as follows:
\[
  (q;q)_\infty\theta_q(x)+(q;q)_\infty\theta_q(-x)
  =2(q^4;q^4)_\infty\theta_{q^4}(-qx^2).
\]
With $x=q^{k+2}/t$, the argument $-qx^2$ becomes
$p^{k+1}\lambda$. The identity $\theta_q(z)=\theta_q(q/z)$ replaces
$\theta_q(\pm x)$ by $\theta_q(\pm t/q^{k+1})$; shifting these
arguments to $\pm t/q$ and using
$(p;p)_\infty=(q;q)_\infty(-q;q)_\infty$ gives
\[
  2(-q;q)_\infty(p;p)_\infty I_k(\lambda)
  =q^{k(k-1)/2}
       \left\{t^k\theta_q(t/q)+(-t)^k\theta_q(-t/q)\right\}.
\]
Multiplying by $c_k$ and summing for $0\leq k<N$ proves
\eqref{eq:finite-contour-connection}.

For the difference equation, multiply \eqref{eq:ck-first-order}
by $q^{k(k-1)/2}t^k$, sum over $k\geq1$, and use $c_0=1$ and
\eqref{eq:q-airy-definition}. This gives the formal identity
\begin{equation}\label{eq:h-first-order}
  h(t)+(bt/p)h(t/q)=\operatorname{Ai}_q(t).
\end{equation}
Since $\theta_q(t/q^2)=-(t/p)\theta_q(t/q)$, multiplication by
$\theta_q(t/q)$ yields
\[
  \theta_q(t/q)h(t)-b\theta_q(t/q^2)h(t/q)
  =\theta_q(t/q)\operatorname{Ai}_q(t).
\]
Under $\lambda=-q^3/t^2$, the shift $\sigma_p$ acts by $t\mapsto t/q$.
Applying $R_p$ to the right-hand side and dividing by $\theta_q(t/q)$ therefore gives
\begin{align*}
    \operatorname{Ai}_q(t)+(t/p)\operatorname{Ai}_q(t/q)
      -\operatorname{Ai}_q(t/q^2)=0,
\end{align*}
by \eqref{eq:q-airy-equation} at $t/q^2$.
Consequently $R_p(1-b\sigma_p)$ annihilates
$\theta_q(t/q)h(t)$, which is exactly
\eqref{eq:homogeneous-equation}. Replacing $t$ by $-t$ proves the
same statement for $\theta_q(-t/q)h(-t)$.
\end{proof}

Thus \eqref{eq:FDivergent} is completely determined by the
normalisation of $f$ at the origin. Corollary~\ref{cor:weighted-asymptotics}
will establish this formal expression as an asymptotic expansion of
$f$, with an explicit remainder estimate.

\section{Meromorphic connection formula}\label{sec:resummation}

Throughout this section, fix $b\in\mathbb C\setminus\{1/p^{m}:m\in\mathbb Z_{\geq0}\}$. We first apply discrete $q$-Borel--Laplace summation to the divergent power
series $h$, providing convergent, bilateral power series representations described asymptotically by $h$. Replacing $h(t)$ and $h(-t)$ in \eqref{eq:FDivergent} by
these sums gives meromorphic solutions of the equation for $f$.
We then prove a remainder estimate for both these sums and $f$,
measured relative to the theta factors in the formal expression.
Dependence on the summation spiral and the parameter $b$ is treated
in Section~\ref{sec:specializations}.

\subsection{Summation of the formal power series}

We first apply a discrete $q$-Borel--Laplace summation to the divergent, formal series $h$; yielding meromorphic functions that are proved to be asymptotic to $h$ as $t\to0$ as defined in Subsection \ref{subsec:asymtptoticDomains}.

\begin{lemma}[Borel transform and discrete summation]
\label{lem:discrete-summation}
For the fixed parameter $b$, define
\begin{equation}\label{eq:psi-product}
  \psi(s;b):=(-s;p)_\infty/(1+bs/p).
\end{equation}
At a removable singularity, this expression is continued in $s$, with
$b$ held fixed. Let $\sigma\in\Cstar$ be admissible for this continued
function. If $b\notin\{0,p,p^2,\ldots\}$, admissibility means
$\sigma\notin-q^{\mathbb Z}/b$; otherwise every $\sigma\ne0$ is
admissible. The positive Borel transform of $h$ is $\psi$, and its discrete Laplace sum is
\begin{equation}\label{eq:S-bilateral}
  \mathcal S_\sigma(t;b):=
  \frac{1}{(q;q)_\infty\theta_q(-\sigma/t)}
  \sum_{j\in\mathbb Z}
    \psi(q^j\sigma;b)q^{j(j-1)/2}(\sigma/t)^j.
\end{equation}
The bilateral series in \eqref{eq:S-bilateral} converges normally on compact subsets of $\Cstar$.
The resulting function is meromorphic in $t$, with possible poles on
$-\sigma q^{\mathbb Z}$, and satisfies
\begin{equation}\label{eq:S-first-order}
  \mathcal S_\sigma(t;b)+(bt/p)\mathcal S_\sigma(t/q;b)
  =\operatorname{Ai}_q(t).
\end{equation}
On closed asymptotic domains avoiding $-\sigma q^{\mathbb Z}$,
\begin{equation}\label{eq:S-asymptotic}
  \mathcal S_\sigma(t;b)\sim_q h(t)\qquad(t\to0).
\end{equation}

The formal series $h$ is entire when
$b\in\{0,p,p^2,\ldots\}$ and has radius of convergence zero for every
other allowed value of $b$. In the convergent cases,
$\mathcal S_\sigma$ equals its ordinary sum and is independent of
$\sigma$, with any apparent poles in $t$ removed. In particular, $\mathcal S_\sigma(t;0)=\operatorname{Ai}_q(t)$.
\end{lemma}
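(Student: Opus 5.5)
The first step is to compute $\mathcal B_q^+h$ explicitly. Since $h(t)=\sum_k q^{k(k-1)/2}c_kt^k$, the positive $q$-Borel transform cancels the Gaussian factor and gives $\mathcal B_q^+h(s)=\sum_k c_ks^k$. To identify this with $\psi$, translate the recurrence \eqref{eq:ck-first-order} into a functional equation. Multiply by $s^k$, sum, and use Euler's identity $\sum_k s^k/(p;p)_k=1/(s;p)_\infty$. This gives $(1+bs/p)\,C(s)$ plus a correction term. Doing the bookkeeping carefully, $C(s)=\sum c_ks^k$ should satisfy the first-order equation $C(s)+(b s/p)\,C(s/p)\cdot(\dots)$.

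A cleaner route is to apply the operational rule for $\mathcal B_q^+$ directly to \eqref{eq:h-first-order}. There $t\sigma_q^{-1}$ maps to $\tau\sigma_q^{0}$, so \eqref{eq:h-first-order} becomes
\[
(1+b\tau/p)\,\mathcal B_q^+h(\tau)=\mathcal B_q^+\operatorname{Ai}_q(\tau).
\]
The right-hand side equals $\sum_n q^{n(n-1)}\tau^n/(p;p)_n=(-\tau;p)_\infty$ by Euler. This gives $\mathcal B_q^+h=\psi$ immediately, as a germ at $0$ continued meromorphically. It also shows that $\psi$ has at most one simple pole, at $s=-p/b$, which is cancelled exactly when $-p/b\in -p^{-\mathbb Z_{\ge0}}$, i.e.\ $b\in\{p,p^2,\dots\}$ (together with the trivial case $b=0$). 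This identifies the admissibility condition and gives the stated form of $\mathcal S_\sigma$ after expanding $\theta_q(-q^j\sigma/t)$ via $\theta_q(q^jz)=(-1/z)^jq^{-j(j-1)/2}\theta_q(z)$ in \eqref{eq:discrete-laplace-definition}.

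Convergence comes from the growth of $\psi$. On $q^{j}\sigma$, $|\psi|$ is bounded for $j\to+\infty$. For $j\to-\infty$, $(-s;p)_\infty=\theta_p(-s)/(-p/s;p)_\infty$ grows like $|s|^{O(1)}|p|^{-m^2/2}$-type theta growth, roughly $|q|^{-j^2/2}$ up to geometric factors. Against the weight $|q|^{j(j-1)/2}$ this leaves only geometric factors in $j$, and I must check that the net exponent is still Gaussian-decaying. Concretely, $\theta_p(-q^j\sigma)\sim |q|^{-j^2/2}\cdot|\cdot|^{j}$ up to constants, so the product with $q^{j(j-1)/2}$ is of order $C^{|j|}$, and here lies the first subtlety. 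I will instead split $\psi=\theta_p(-s)\,\rho(s)$ with $\rho=1/((-p/s;p)_\infty(1+bs/p))$, bounded as $s\to\infty$. I will then use the fact that $\theta_p(-q^{j}\sigma)$ for even and odd $j$ reduces to $\theta_p(-\sigma)$ or $\theta_p(-q\sigma)$ times explicit $p$-powers. This turns the $j<0$ half of the sum into a theta-type series in $t$ that converges for all $t\ne0$, i.e.\ normal convergence on compacta of $\Cstar$ (with the $\theta_q(-\sigma/t)$ prefactor supplying the poles on $-\sigma q^{\mathbb Z}$). The relation \eqref{eq:S-first-order} follows either termwise, using $\psi(s)(1+bs/p)=(-s;p)_\infty$ and shifting $j$, or from the general commutation of $\mathcal L^{[\sigma]}_{q;1}$ with $t\sigma_q^{-1}$ followed by the moment identity \eqref{eq:laplace-moments} applied to $\mathcal B^+\operatorname{Ai}_q$, which is entire, so that its Laplace sum is $\operatorname{Ai}_q$.

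The main obstacle is the Gevrey remainder \eqref{eq:S-asymptotic}. The plan is the standard Zhang argument. Write $\psi(s)=\sum_{n<N}c_ns^n+s^NR_N(s)$. The Laplace sum of the polynomial part is $\sum_{n<N}q^{n(n-1)/2}c_nt^n$ by \eqref{eq:laplace-moments}. It remains to bound $\mathcal L^{[\sigma]}(s^NR_N)$ by $CA^N|q|^{N(N-1)/2}|t|^N$; note the sign convention, since the definition uses $|r|^{-N(N-1)/2}$ with $r=q^{-1}$-type normalisation, and I will match exponents accordingly. For this I need $|R_N(s)|\le CA^N\,|{\rm growth}(s)|$ uniformly on the spiral $\sigma q^{\mathbb Z}$. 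I will obtain it via the Cauchy formula on circles avoiding $-p/b$ for small $s$, and via $|c_n|\le CA^n$ together with $q$-exponential growth of $\psi$ for large $s$. The resulting weighted sums are then estimated using the uniform lower bound of $|\theta_q(-\sigma/t)|$ on asymptotic domains staying away from $-\sigma q^{\mathbb Z}$.

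Finally, for the dichotomy: when $b\in\{0,p,p^2,\dots\}$, $\psi$ is entire with theta-type growth, so $|c_k|\le C|q|^{-k^2/2}$-ish, and the Gaussian weight makes $h$ entire. Then $\mathcal S_\sigma=h$ by applying the moment identity termwise, which is justified by absolute convergence; hence it is $\sigma$-independent with removable poles, and $b=0$ gives $\operatorname{Ai}_q$. Otherwise the genuine pole at $-p/b$ forces $c_k\sim \mathrm{const}\cdot(-b/p)^k$ with nonzero residue constant, since $(q\text{-}$residue $=(1-p^{m})$-type product$)\ne0$. The factor $q^{k(k-1)/2}c_k$ then grows superexponentially, giving radius zero.
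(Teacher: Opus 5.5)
There is a genuine error in your proposal, although several parts are right. Your identification of $\psi$ by applying the operational rule to \eqref{eq:h-first-order} is correct and equivalent to the paper's computation, which multiplies \eqref{eq:ck-first-order} by $p^{n(n-1)/2}s^n$. Your derivation of \eqref{eq:S-first-order} is also the paper's, with termwise inversion justified by absolute convergence via $\kappa_\sigma$.

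\textbf{The normalisation error.} Your first paragraph claims that $\mathcal B_q^+$ cancels the Gaussian factor, so that $\mathcal B_q^+h=\sum_k c_ks^k$. It does the opposite. The Taylor coefficients of $\psi$ are $a_k=q^{k(k-1)}c_k=p^{k(k-1)/2}c_k$, and the coefficients of $h$ are $q^{-k(k-1)/2}a_k$. Your own computation of $\mathcal B_q^+\operatorname{Ai}_q$ already shows this doubling. The wrong identification returns in your last paragraph, and the dichotomy argument fails as written:
\begin{itemize}
\item In the divergent case you take $c_k\sim\mathrm{const}\cdot(-b/p)^k$ and conclude that $q^{k(k-1)/2}c_k$ grows superexponentially. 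Since $|q|<1$, this sequence tends to $0$ superexponentially, so the argument would actually prove $h$ entire.
\item In the convergent case, $|c_k|\le C|q|^{-k^2/2}$ with the weight $q^{k(k-1)/2}$ only bounds the coefficients of $h$ by $C|q|^{-k/2}$. That does not give an entire function.
\end{itemize}
The mechanism is inverted: passing from $\psi$ to $h$ multiplies by the \emph{growing} factor $q^{-k(k-1)/2}$. The correct reasoning is as follows.
\begin{itemize}
\item Divergence holds because the pole gives $a_k\sim(p/b;p)_\infty(-b/p)^k$. The entire remainder has coefficients decaying faster than any geometric sequence, so it cannot cancel this. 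The constant $(p/b;p)_\infty$ is nonzero exactly when $b\notin\{p,p^2,\ldots\}$.
\item Convergence for $b\in\{0,p,p^2,\ldots\}$ holds because an entire $\psi$ with $\max_{|s|=|\sigma/q^m|}|\psi|\le CB^m|q|^{-m^2/4}$ has $|a_k|\le C'B'^k|q|^{k^2}$ by Cauchy. This beats $|q|^{-k(k-1)/2}$.
\end{itemize}
The same slip appears in your Gevrey paragraph. The moment identity gives $\sum_{n<N}a_nq^{-n(n-1)/2}t^n$, and the target bound is $CA^N|q|^{-N(N-1)/2}|t|^N$, not $|q|^{+N(N-1)/2}$.

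\textbf{Two further points.} First, your convergence ``subtlety'' is an arithmetic slip. Because the base is $p=q^2$, you have $|(-\sigma/q^m;p)_\infty|\le CB^m|q|^{-m^2/4}$, not $|q|^{-m^2/2}$. Against $|q|^{m(m+1)/2}$, the negative tail therefore decays like $|q|^{m^2/4}$. This direct bound is exactly the paper's, and your theta-reduction detour would only rediscover it.

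Second, your Zhang-style remainder plan does not work as stated. The form $|R_N(s)|\le CA^N\,\mathrm{growth}(s)$ with an $N$-independent growth function is too weak. Summing such a bound against $|s|^N/|\theta_q(-s/t)|$ at $t=q^nu$, after the theta shift, leaves an extra factor of order $|q|^{-(N-n)^2/2}$ for $N>n$, and this is not $O(A^N)$. You must keep the factor $|s|^{-N}$, or restrict to $N\lesssim n$ and treat larger $N$ separately. That means using $|s^NR_N(s)|\le|\psi(s)|+CA^N|s|^N$ for $|s|\ge\rho$, and then proving that $\sum_{|q^j\sigma|\ge\rho}|\psi(q^j\sigma)|/|\theta_q(-q^j\sigma/t)|$ is $q$-Gevrey flat.

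The paper avoids all of this by writing $\psi=(p/b;p)_\infty/(1+bs/p)+\psi_{\mathrm{ent}}$. The entire part sums to a convergent power series. The pole part has the exact remainder $(-b/p)^Nq^{-N(N-1)/2}t^NP_\sigma(t/q^N)$, which is bounded uniformly in $N$ by $C\kappa_\sigma(t)$. This works because $1/(1+bs/p)$ is bounded on the spiral and $\kappa_\sigma$ is $q$-invariant. The same split also gives the convergence dichotomy directly.
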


The proof, including the estimates for Laplace inversion, is given in Appendix~\ref{app:discrete-summation}.

\subsection{The exact connection formula}

Lemma \ref{lem:discrete-summation} (along with corresponding explicit asymptotic estimates in Appendix~\ref{app:discrete-summation}) serves as the backbone of the remaining arguments. We now show the construction of meromorphic solutions of Equation \eqref{eq:first-order-extension} explicitly in terms of $\mathcal{S}_\sigma$.

\begin{theorem}[Bilateral connection formula]
\label{thrm:MeromorphicResummation}
For fixed $b$ and an admissible $\sigma$, let
\begin{equation}\label{eq:F-symmetric}
  F_\sigma(\lambda;b):=
  \frac{\theta_q(t/q)\mathcal S_\sigma(t;b)
        +\theta_q(-t/q)\mathcal S_\sigma(-t;b)}{2(-q;q)_\infty},
  \qquad \lambda=-q^3/t^2.
\end{equation}
This defines a meromorphic function of $\lambda\in\Cstar$, with explicit
bilateral representation
\begin{align}
  2(p;p)_\infty F_\sigma(\lambda;b)={}&
  \frac{\theta_q(t/q)}{\theta_q(-\sigma/t)}
  \sum_{j\in\mathbb Z}
    \frac{(-q^j\sigma;p)_\infty}{1+bq^j\sigma/p}
    q^{j(j-1)/2}(\sigma/t)^j\nonumber\\
  &+\frac{\theta_q(-t/q)}{\theta_q(\sigma/t)}
  \sum_{j\in\mathbb Z}
    \frac{(-q^j\sigma;p)_\infty}{1+bq^j\sigma/p}
    q^{j(j-1)/2}(-\sigma/t)^j,
  \label{eq:convergent-connection}
\end{align}
where each quotient is evaluated as $\psi(q^j\sigma;b)$, with any common factor cancelled at the fixed parameter $b$, as in Lemma 4.1. It satisfies
\begin{equation}\label{eq:F-first-order}
  F_\sigma(\lambda;b)-bF_\sigma(p\lambda;b)=A_p(\lambda),
\end{equation}
and hence the homogeneous equation \eqref{eq:homogeneous-equation}.
\end{theorem}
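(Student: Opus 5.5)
The plan is to reduce everything to Lemma~\ref{lem:discrete-summation}, mirroring the formal computation in the proof of Proposition~\ref{prop:contour-coefficients}.

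\emph{Meromorphy and the bilateral form.} By the lemma, $\mathcal S_\sigma(\pm t;b)$ is meromorphic in $t\in\Cstar$, and $\theta_q(\pm t/q)$ is holomorphic there, so the numerator of \eqref{eq:F-symmetric} is meromorphic in $t$. The expression is invariant under $t\mapsto -t$, since the two summands are exchanged. It is therefore a meromorphic function of $t^2$, hence of $\lambda=-q^3/t^2$ on $\Cstar$. For \eqref{eq:convergent-connection}, substitute \eqref{eq:S-bilateral} directly, using \eqref{eq:psi-product} for $\psi(q^j\sigma;b)$. For the second summand, replace $t$ by $-t$, which turns $\theta_q(-\sigma/t)$ into $\theta_q(\sigma/t)$ and $(\sigma/t)^j$ into $(-\sigma/t)^j$. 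Then multiply by $2(p;p)_\infty=2(q;q)_\infty(-q;q)_\infty$; this cancels the $(q;q)_\infty$ in \eqref{eq:S-bilateral} and the $2(-q;q)_\infty$ in \eqref{eq:F-symmetric}. Convergence of the series is inherited from the lemma.

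\emph{The first-order relation.} Under $\lambda=-q^3/t^2$, the shift $\lambda\mapsto p\lambda$ corresponds to $t\mapsto t/q$, as already noted in Proposition~\ref{prop:contour-coefficients}. Use $\theta_q(t/q^2)=-(t/p)\theta_q(t/q)$ together with \eqref{eq:S-first-order}:
\[
\theta_q(t/q)\mathcal S_\sigma(t)-b\,\theta_q(t/q^2)\mathcal S_\sigma(t/q)
=\theta_q(t/q)\bigl[\mathcal S_\sigma(t)+(bt/p)\mathcal S_\sigma(t/q)\bigr]
=\theta_q(t/q)\operatorname{Ai}_q(t).
\]
The same identity holds with $t$ replaced by $-t$. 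Summing the two and dividing by $2(-q;q)_\infty$ gives
\[
F_\sigma(\lambda)-bF_\sigma(p\lambda)
=\frac{\theta_q(t/q)\operatorname{Ai}_q(t)+\theta_q(-t/q)\operatorname{Ai}_q(-t)}{2(-q;q)_\infty}.
\]
It remains to identify the right-hand side with $A_p(\lambda)$. This is the Ramanujan--$q$-Airy connection formula, i.e.\ the case $b=0$. I would prove it within the paper's framework rather than cite it. At $b=0$ we have $c_k=1/(p;p)_k$, so $h=\operatorname{Ai}_q$ is entire. Then $\Phi_N\to\theta_p(p\mu)/(1/\mu;p)_\infty=(p\mu;p)_\infty=\phi_0|_{b=0}$ uniformly on $|\mu|=R$ for any fixed $R>1$. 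Letting $N\to\infty$ in \eqref{eq:finite-contour-connection} and using the contour representation of $f(\lambda;0)=A_p(\lambda)$ from that proof gives exactly this identity.

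\emph{Consequences and the main obstacle.} The homogeneous equation \eqref{eq:homogeneous-equation} then follows by applying $R_p$ to \eqref{eq:F-first-order}, since $R_pA_p=0$. The main point needing care is the limit $N\to\infty$ in the $b=0$ case. This requires the uniform convergence of $\sum c_k\mu^{-k}$ on $|\mu|=R>1$, which holds because $1/(1/\mu;p)_\infty$ is holomorphic for $|\mu|>1$. A secondary point is the handling of removable singularities. Where $\psi$ has a cancelled common factor, the quotient in \eqref{eq:convergent-connection} must be read as the continued value $\psi(q^j\sigma;b)$. With that convention it agrees termwise with \eqref{eq:S-bilateral}, so no additional argument is needed.
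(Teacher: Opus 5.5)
Your proof is correct, and its skeleton matches the paper's: evenness in $t$ gives meromorphy in $\lambda$, direct substitution of \eqref{eq:S-bilateral} with $(p;p)_\infty=(q;q)_\infty(-q;q)_\infty$ gives the bilateral form, the theta shifts $\theta_q(\pm t/q^2)=\mp(t/p)\theta_q(\pm t/q)$ are combined with \eqref{eq:S-first-order} at $\pm t$, and $R_p$ is applied at the end.

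The one genuine difference is the final identification with $A_p(\lambda)$.
\begin{itemize}
\item The paper simply invokes the Ramanujan--$q$-Airy connection formula \eqref{eq:ramanujan-connection}, cited from Morita, and converts it via $(-1;q)_\infty=2(-q;q)_\infty$.
\item You instead derive it as the $N\to\infty$ limit of \eqref{eq:finite-contour-connection} at $b=0$.
\end{itemize}

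Your derivation is sound and not circular. The proof of Proposition~\ref{prop:contour-coefficients} evaluates each $I_k(\lambda)$ directly from the triple product and never uses the connection formula. With $c_k=1/(p;p)_k$ and $R>1$, you get $\Phi_N\to\theta_p(p\mu)/(1/\mu;p)_\infty=(p\mu;p)_\infty=\phi_0$ uniformly on $|\mu|=R$. Since $\theta_p(-\lambda/\mu)$ is bounded on that circle, the integrals converge. The right-hand side tends to the $\operatorname{Ai}_q$ combination.

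The paper's route is shorter but rests on an external result. Yours makes the theorem self-contained. It also shows that at $b=0$ the coefficientwise contour inversion of Section~\ref{sec:connection} is an actual identity, precisely because $h=\operatorname{Ai}_q$ converges. The same limit is unavailable for generic $b$, where $h$ diverges, which is why the discrete summation is needed there.
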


\begin{proof}
Lemma~\ref{lem:discrete-summation} gives normal convergence of the
series defining $\mathcal S_\sigma$ and its meromorphicity in $t$.
The expression \eqref{eq:F-symmetric} is unchanged by $t\mapsto-t$,
so the two choices of $t$ for a given $\lambda=-q^3/t^2$ give the
same value. It therefore defines a meromorphic function of $\lambda$.
Substituting \eqref{eq:S-bilateral} into \eqref{eq:F-symmetric} and
using $(p;p)_\infty=(q;q)_\infty(-q;q)_\infty$ gives
\eqref{eq:convergent-connection}.

To prove \eqref{eq:F-first-order}, apply \eqref{eq:S-first-order}
at $t$ and $-t$ in \eqref{eq:F-symmetric}. The theta shifts
$\theta_q(\pm t/q^2)=\mp(t/p)\theta_q(\pm t/q)$ give
\begin{align*}
  F_\sigma(\lambda;b)-bF_\sigma(p\lambda;b)
  &=\frac{\theta_q(t/q)\operatorname{Ai}_q(t)
        +\theta_q(-t/q)\operatorname{Ai}_q(-t)}{2(-q;q)_\infty}=A_p(\lambda).
\end{align*}
The last equality is the connection formula
\eqref{eq:ramanujan-connection}, using
$(-1;q)_\infty=2(-q;q)_\infty$.
Applying $R_p$ to \eqref{eq:F-first-order} then gives
\eqref{eq:homogeneous-equation}.
\end{proof}

\begin{remark*}[Ramanujan specialisation]
In our theta normalisation, the Ramanujan--$q$-Airy
connection formula~\cite{Morita2014} is
\begin{equation}\label{eq:ramanujan-connection}
  (-1;q)_\infty A_{q^2}(-q^3/t^2)
  =\theta_q(t/q)\operatorname{Ai}_q(t)
   +\theta_q(-t/q)\operatorname{Ai}_q(-t),\qquad t\in\Cstar.
\end{equation}
At $b=0$, Lemma~\ref{lem:discrete-summation} gives
$\mathcal S_\sigma(t;0)=\operatorname{Ai}_q(t)$.
Thus \eqref{eq:F-symmetric} recovers this identity and gives
$F_\sigma(\lambda;0)=A_p(\lambda)$.
\end{remark*}

\subsection{The weighted asymptotic expansion of $f$}

We now establish the formal expression $\mathcal F$ as an asymptotic expansion of
the entire function $f$ as $\lambda\to\infty$. The estimate for $F_\sigma$ follows from
Lemma~\ref{lem:discrete-summation}; the first-order relation \eqref{eq:first-order-extension} controls
the difference $f-F_\sigma$.

Let $\mathcal F_N(t;b)$ denote the finite expression on the right-hand
side of \eqref{eq:finite-contour-connection}, retaining the terms
$0\leq k<N$, and let $W(t):=|\theta_q(t/q)|+|\theta_q(-t/q)|$. The two theta factors set the scale of the remainder.

\begin{corollary}[Asymptotics of the summed and entire solutions]
\label{cor:weighted-asymptotics}
Let $\sigma$ be admissible for the fixed parameter $b$. On every closed
asymptotic domain avoiding
$\sigma q^{\mathbb Z}\cup-\sigma q^{\mathbb Z}$, there are $A,C>0$
such that both $U=F_\sigma$ and $U=f$ satisfy
\begin{equation}\label{eq:weighted-asymptotic}
  \left|U(-q^3/t^2;b)-\mathcal F_N(t;b)\right|
  \leq CA^N|q|^{-N(N-1)/2}|t|^N W(t)
\end{equation}
for every $N\geq0$ and all sufficiently small $t$.
Thus, the formal contour expression \eqref{eq:FDivergent} is a
theta-weighted $q$-Gevrey expansion of the distinguished entire
solution.
\end{corollary}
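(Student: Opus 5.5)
We treat $U=F_\sigma$ first and then $U=f$, writing $f=F_\sigma+(f-F_\sigma)$ and showing that the difference is negligible at the theta-weighted scale.

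\textbf{The summed solution.} By \eqref{eq:F-symmetric} and the definition of $\mathcal F_N$,
\[
  F_\sigma-\mathcal F_N=\frac{\theta_q(t/q)\bigl(\mathcal S_\sigma(t)-h_N(t)\bigr)+\theta_q(-t/q)\bigl(\mathcal S_\sigma(-t)-h_N(-t)\bigr)}{2(-q;q)_\infty},
\]
where $h_N$ is the $N$-th partial sum of $h$. On a closed asymptotic domain avoiding $\sigma q^{\mathbb Z}\cup-\sigma q^{\mathbb Z}$, both $t$ and $-t$ stay away from the pole spiral $-\sigma q^{\mathbb Z}$ of $\mathcal S_\sigma$. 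Hence \eqref{eq:S-asymptotic} applies to $\mathcal S_\sigma(\pm t)$ with uniform constants $A,C$. The triangle inequality then gives \eqref{eq:weighted-asymptotic} for $U=F_\sigma$, with $W(t)$ bounding the two theta moduli. This step is routine once Lemma~\ref{lem:discrete-summation} is granted.

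\textbf{The entire solution.} Put $D:=f-F_\sigma$. By \eqref{eq:first-order-extension} and \eqref{eq:F-first-order}, $D(\lambda)=bD(p\lambda)$. In the variable $t$ this reads $D(t)=bD(t/q)$, so $D$ is a meromorphic solution of a rank-one $q$-difference equation. Hence
\[
  D=\frac{\theta_q(t/q)\,P_1(t)+\theta_q(-t/q)\,P_2(t)}{2(-q;q)_\infty}\quad\text{for suitable }P_1,P_2.
\]
More directly, write $D=g(t)\,e_b(t)$, where $e_b$ is a fixed solution of $e(t)=b\,e(t/q)$ that is even in $t$, for example $e_b(t)=\theta_p(c t^2)/\theta_p(c' t^2)$ with $c/c'$ tuned to $b$. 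Then $g$ is $q$-periodic, meaning $g(qt)=g(t)$, and meromorphic on $\Cstar$, with poles only on the images of $\pm\sigma q^{\mathbb Z}$.

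A $q$-periodic function is bounded on a domain of the form $\{q^k u:u\in K\}$, since its values there are just its values on the compact set $K$. The key point is to show that $|e_b(t)|$ is $q$-Gevrey flat relative to $W(t)$: it should grow at most like $|b|^{-k}$ along $t=q^ku$, whereas $W(q^ku)$ grows like $|q|^{-k^2/2}$ times powers of $|u|$. Granting this, $|D(t)|\le C'|b|^{-k}$. The target bound $CA^N|q|^{-N(N-1)/2}|t|^N W(t)$ at $|t|\asymp|q|^k$ behaves like $|q|^{kN-N^2/2-k^2/2}$ up to geometric factors, and $\min_N$ of this is of order $|q|^{0}$ times $A^N$, which dominates $|b|^{-k}$ after adjusting $A$ and $C$. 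I expect this to need care: the ratio $|D|/W$ decays like $|q|^{k^2/2}|b|^{-k}$, so for each $N$ it is absorbed by a constant times $A^N|q|^{-N(N-1)/2}|q|^{kN}$, because
\[
  k^2/2-kN+N^2/2=(k-N)^2/2\ge0 .
\]

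\textbf{The main obstacle.} The delicate step is bounding $D$ relative to $W$ uniformly. I would avoid constructing $e_b$ explicitly. Instead, take $D(t)=b^{k}D(q^{k}t)$, so that $D(q^ku)=b^{-k}D(u)$, with $D(u)$ bounded on $K$ (compact, away from the poles). Separately, bound $W(q^ku)$ from below using $\theta_q(q^kz)=(-1/z)^kq^{-k(k-1)/2}\theta_q(z)$. This lower bound is uniform on $K$ provided $K$ avoids the zeros $\pm q^{\mathbb Z+1}$ of the thetas; if $K$ meets them, one uses the fact that the two thetas do not vanish simultaneously, so $W$ stays bounded below on $K$. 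Combining the two bounds yields \eqref{eq:weighted-asymptotic} for $U=f$.
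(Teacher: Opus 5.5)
Your proposal follows the paper's proof. The $F_\sigma$ estimate comes from applying \eqref{eq:S-asymptotic} at $\pm t$. For $f$, you iterate the homogeneous relation for $D=f-F_\sigma$ and compare with $W(q^ku)=|q/u|^k|q|^{-k(k-1)/2}W(u)$, using that $W$ is bounded below on $K$ because the two thetas have no common zero. You then absorb the Gaussian gain uniformly in $N$ by completing the square, as the paper does with $B^{k-N}|q|^{(k-N)(k-N-1)/2}$.

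One harmless slip: $D(t)=bD(t/q)$ iterates to $D(q^ku)=b^{k}D(u)$, not $b^{-k}D(u)$. Any such geometric factor is absorbed by writing $B^k=B^NB^{k-N}$, letting $A^N$ take $B^N$ and the Gaussian decay in $k-N$ take $B^{k-N}$, so the conclusion is unaffected. With the correct direction, $b=0$ also gives $D=0$ immediately.
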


\begin{proof}
Fix a closed asymptotic domain generated by a compact set $K$ as
in Section~\ref{sec:preliminaries}. Constants below may depend on
$q,b,\sigma,K$, but not on the point $t$ in this domain or the
truncation order $N$.
Apply \eqref{eq:S-asymptotic} on the domains generated by $K$ and
$-K$, taking common constants $A,C>0$ for the two remainder bounds.
Substitution into \eqref{eq:F-symmetric}, with its fixed normalising
factor absorbed into $C$, proves \eqref{eq:weighted-asymptotic}
for $F_\sigma$.

To obtain the estimate for $f$, we show that its difference from
$F_\sigma$ contributes no terms to the weighted expansion. Put
$D(t):=f(-q^3/t^2;b)-F_\sigma(-q^3/t^2;b)$.
Subtracting \eqref{eq:F-first-order} from
\eqref{eq:first-order-extension} gives $D(qt)=bD(t)$.
If $b=0$, then $D=0$ and there is nothing further to prove.
Assume $b\ne0$ and write $t=q^nu$, with $u\in K$ and $n\geq0$.

The two theta factors defining $W$ have no common zero, so $W$ is
bounded away from zero on $K$. Also, $D$ is holomorphic near $K$ by
\eqref{eq:F-symmetric} and the exclusions $\pm\sigma q^{\mathbb Z}$,
so it is bounded there.
Enlarge $C$ and choose $B\geq1$ so that
\[
  \sup_{u\in K}|D(u)|/W(u)\leq C,
  \qquad (|b|/|q|)\max_{u\in K}|u|\leq B.
\]
Iteration of $D(qt)=bD(t)$ and the theta shift identities then give
\begin{equation*}
  |D(q^nu)|/W(q^nu)
  =|bu/q|^n|q|^{n(n-1)/2}|D(u)|/W(u)
  \leq CB^n|q|^{n(n-1)/2}.
\end{equation*}

To obtain a bound uniform in the truncation order $N$, put
$r=\min_{u\in K}|u|>0$ and enlarge $A$ so that
$A r|q|\geq B$. Dividing the last bound by
$A^N|q|^{-N(N-1)/2}|q^nu|^N$ gives at most
\[
  C\left(B/(A r|q|)\right)^N
   B^{n-N}|q|^{(n-N)(n-N-1)/2}
  \leq C\sup_{k\in\mathbb Z}B^k|q|^{k(k-1)/2}<\infty.
\]
The supremum is finite because the quadratic exponent gives decay
in both directions. Absorbing it into $C$ proves the required
weighted bound for $D$, uniformly in $N,n,u$.
Adding this bound to the estimate for $F_\sigma$ proves
\eqref{eq:weighted-asymptotic} for $f$.
\end{proof}

\begin{remark*}[Scope of the asymptotic estimate]
Proposition~\ref{prop:distinguished-specialization} in the next section
identifies $f=F_{-p}$ for $b=0$ or $b\notin q^{\mathbb Z}$.
Directly specialising the estimate for $F_\sigma$ to this choice gives
the weighted expansion of $f$ for these parameters on closed
asymptotic domains avoiding $q^{\mathbb Z}\cup-q^{\mathbb Z}$.
Corollary~\ref{cor:weighted-asymptotics} establishes
\eqref{eq:weighted-asymptotic} for every allowed $b$ and on the
domains associated with any admissible $\sigma$. The first-order
relation controls $f-F_\sigma$ even when the two functions differ,
as they do at $b=p^m$, $m\geq1$; see
Proposition~\ref{prop:exceptional-specialization}.
\end{remark*}

\section{Dependence on $\sigma$ and $b$}\label{sec:specializations}

We now compare admissible $q$-Borel--Laplace summation
spirals at fixed $b$, then identify the distinguished sum that recovers
$f$ for generic $b$. Exceptional cases show that continuation in
$b$ must be distinguished from summation at a fixed parameter.

\subsection{Dependence on the summation spiral}

The first-order identity \eqref{eq:F-first-order} controls the
difference between sums formed on different spirals (varying $\sigma$).

\begin{proposition}[Dependence on the summation spiral]
\label{prop:stokes-difference}
Fix an allowed $b$, and let $\sigma$ and $\rho$ be admissible for $b$. Then
$\mathcal S_{q\sigma}=\mathcal S_\sigma$ and $F_{q\sigma}=F_\sigma$.
For $b\ne0$, the difference of two fixed-parameter sums has the form
\begin{equation}\label{eq:stokes-character}
  F_\sigma(\lambda;b)-F_\rho(\lambda;b)
  =H_{\sigma,\rho}(\lambda)
    \theta_p(\lambda)/\theta_p(\lambda/b),
  \qquad H_{\sigma,\rho}(p\lambda)=H_{\sigma,\rho}(\lambda),
\end{equation}
with $H_{\sigma,\rho}$ meromorphic. The difference
$\mathcal S_\sigma-\mathcal S_\rho$ is $q$-Gevrey flat on common
admissible domains. The difference $F_\sigma-F_\rho$ is flat relative
to $W(t)$.
\end{proposition}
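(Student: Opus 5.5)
Invariance under $\sigma\mapsto q\sigma$ comes straight from the bilateral sum \eqref{eq:S-bilateral}. Replacing $\sigma$ by $q\sigma$ and reindexing $j\mapsto j-1$ turns $\psi(q^{j+1}\sigma)q^{j(j-1)/2}(q\sigma/t)^j$ into $\psi(q^j\sigma)q^{(j-1)(j-2)/2+(j-1)}(\sigma/t)^{j-1}\cdot$ const. The prefactor $\theta_q(-q\sigma/t)=(t/\sigma)\theta_q(-\sigma/t)$ absorbs the extra factor $\sigma/t$. Equivalently, this is the invariance of $\mathcal L_{q;1}^{[\sigma]}$ noted in Section~\ref{sec:preliminaries}, since \eqref{eq:S-bilateral} is exactly that transform applied to $\psi$. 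Substituting into \eqref{eq:F-symmetric} gives $F_{q\sigma}=F_\sigma$.

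For the structure of the difference, I would set $D:=F_\sigma-F_\rho$. Subtracting the two instances of \eqref{eq:F-first-order} gives $D(\lambda)=bD(p\lambda)$, a homogeneous first-order $p$-difference equation. The function $g(\lambda):=\theta_p(\lambda)/\theta_p(\lambda/b)$ is meromorphic on $\Cstar$ for $b\ne0$. The shift rule $\theta_p(pz)=-\theta_p(z)/z$ gives
\[
g(p\lambda)=\frac{-\theta_p(\lambda)/\lambda}{-\theta_p(\lambda/b)\,b/\lambda}=g(\lambda)/b,
\]
so $g(\lambda)=bg(p\lambda)$ as well. Since $g\not\equiv0$, the quotient $H_{\sigma,\rho}:=D/g$ is meromorphic on $\Cstar$ and satisfies $H(p\lambda)=H(\lambda)$, which is \eqref{eq:stokes-character}. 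I would remark that at $b\in p^{\mathbb Z}$ the factor $g$ is a constant multiple of a power of $\lambda$, but the formula still holds as stated. When $b=0$ the difference vanishes identically, because \eqref{eq:F-first-order} then reads $F=A_p$.

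The flatness statements come from the asymptotics already established. On a closed asymptotic domain avoiding both spirals $-\sigma q^{\mathbb Z}$ and $-\rho q^{\mathbb Z}$, \eqref{eq:S-asymptotic} holds for both sums with the same series $h$. By the triangle inequality, $|\mathcal S_\sigma-\mathcal S_\rho|\le 2CA^N|q|^{-N(N-1)/2}|t|^N$ for all $N$, which is $q$-Gevrey flatness. Likewise, Corollary~\ref{cor:weighted-asymptotics} applied to $F_\sigma$ and to $F_\rho$ on domains avoiding $\pm\sigma q^{\mathbb Z}\cup\pm\rho q^{\mathbb Z}$ gives the same $\mathcal F_N$ for both. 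Subtracting yields $|F_\sigma-F_\rho|\le 2CA^N|q|^{-N(N-1)/2}|t|^NW(t)$, which is flatness relative to $W$. Alternatively, one could argue directly from the iteration estimate in the proof of that corollary, applied to $D$.

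The main obstacle is bookkeeping rather than analysis. Constants must be taken uniform over the intersection of the admissible domains for $\sigma$ and $\rho$, which requires the compact generating set $K$ to avoid all four spirals. The removable-singularity convention for $\psi$ at special $b$ must also be respected in the reindexing step. Both points are handled by choosing $K$ accordingly and noting that reindexing preserves the continued values $\psi(q^j\sigma;b)$.
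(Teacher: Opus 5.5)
Your proposal is correct and follows the paper's proof essentially step for step. You reindex the discrete Laplace sum to get invariance under $\sigma\mapsto q\sigma$, subtract the two instances of \eqref{eq:F-first-order} to obtain $D(\lambda)=bD(p\lambda)$ and divide by the quotient $\theta_p(\lambda)/\theta_p(\lambda/b)$, which satisfies the same equation, and get both flatness statements by subtracting common-constant remainder bounds from \eqref{eq:S-asymptotic} and Corollary~\ref{cor:weighted-asymptotics}; your explicit reindexing computation and the observation that the quotient is a constant multiple of $\lambda^{-m}$ at $b=p^m$ are correct but do not change the argument.
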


\begin{proof}
Reindexing the discrete Laplace sum
\eqref{eq:discrete-laplace-definition} gives
$\mathcal S_{q\sigma}=\mathcal S_\sigma$; substitution into
\eqref{eq:F-symmetric} gives $F_{q\sigma}=F_\sigma$.

For $b\ne0$, define
$\Delta(\lambda):=F_\sigma(\lambda;b)-F_\rho(\lambda;b)$.
Subtracting \eqref{eq:F-first-order} for the two spirals gives
$\Delta(\lambda)=b\Delta(p\lambda)$. The theta shift identity gives
\[
  \theta_p(p\lambda)/\theta_p(p\lambda/b)
  =(1/b)\theta_p(\lambda)/\theta_p(\lambda/b).
\]
Thus
$H_{\sigma,\rho}(\lambda):=
\Delta(\lambda)\theta_p(\lambda/b)/\theta_p(\lambda)$
is meromorphic and $p$-periodic, proving \eqref{eq:stokes-character}.

On a closed asymptotic domain admissible for both spirals, take
common constants for the remainder bounds expressed by
\eqref{eq:S-asymptotic} and subtract the two estimates.
The common formal series cancels, proving $q$-Gevrey
flatness of $\mathcal S_\sigma-\mathcal S_\rho$.
Applying the same argument to \eqref{eq:weighted-asymptotic} proves
flatness of $F_\sigma-F_\rho$ relative to $W$. For $b=0$, both differences vanish by
Lemma~\ref{lem:discrete-summation} and \eqref{eq:F-first-order}.
\end{proof}

\subsection{The distinguished spiral and continuation in $b$} 

\begin{proposition}[The distinguished summation spiral]
\label{prop:distinguished-specialization}
For $b=0$ or $b\notin q^{\mathbb Z}$, the choice $\sigma=-p$ is
admissible and gives $F_{-p}(\lambda;b)=f(\lambda;b)$ for $\lambda\in\Cstar$.
\end{proposition}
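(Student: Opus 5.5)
The plan is to show that at $\sigma=-p$ the theta prefactors in \eqref{eq:convergent-connection} cancel identically, so that $F_{-p}$ is holomorphic on $\Cstar$. Then the first-order relation together with a Laurent-coefficient argument forces $f-F_{-p}=0$.

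\textbf{Admissibility.} For $b=0$, every $\sigma\neq0$ is admissible by Lemma~\ref{lem:discrete-summation}. If $b\in\{p,p^2,\ldots\}\subset q^{\mathbb Z}$, this case is excluded by hypothesis. For the remaining $b\ne0$, admissibility of $\sigma=-p$ means $-p\notin -q^{\mathbb Z}/b$, that is, $b\notin q^{\mathbb Z}/p=q^{\mathbb Z}$, which is exactly the hypothesis. The case $b=0$ is already covered by the Remark after Theorem~\ref{thrm:MeromorphicResummation}, since $F_\sigma(\lambda;0)=A_p(\lambda)=f(\lambda;0)$. So I assume $b\ne0$, $b\notin q^{\mathbb Z}$ from now on.

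\textbf{Cancellation of the theta factors.} The key observation uses $\theta_q(z)=\theta_q(q/z)$. With $\sigma=-p$ we get
\[
\theta_q(-\sigma/t)=\theta_q(p/t)=\theta_q(t/q),
\qquad
\theta_q(\sigma/t)=\theta_q(-p/t)=\theta_q(-t/q).
\]
Hence both quotients in \eqref{eq:convergent-connection} equal $1$, and
\[
2(p;p)_\infty F_{-p}(\lambda;b)=\sum_{j\in\mathbb Z}\psi(-q^jp;b)\,q^{j(j-1)/2}\bigl[(-p/t)^j+(p/t)^j\bigr].
\]
By the normal convergence on compact subsets of $\Cstar$ stated in Lemma~\ref{lem:discrete-summation}, this is a convergent Laurent series in $t$ on all of $\Cstar$. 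Only even $j$ survive, so it is a Laurent series in $t^2$, hence in $\lambda=-q^3/t^2$. Thus $F_{-p}(\cdot;b)$ is holomorphic on $\Cstar$: the apparent poles on $\pm\sigma q^{\mathbb Z}$ are removed.

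\textbf{Uniqueness via the first-order relation.} Put $D(\lambda):=f(\lambda;b)-F_{-p}(\lambda;b)$. This function is holomorphic on $\Cstar$, so it has a Laurent expansion $D(\lambda)=\sum_{n\in\mathbb Z}d_n\lambda^n$. Subtracting \eqref{eq:F-first-order} from \eqref{eq:first-order-extension} gives $D(\lambda)=bD(p\lambda)$. Comparing coefficients yields $d_n(1-bp^n)=0$ for all $n\in\mathbb Z$. Since $b\notin q^{\mathbb Z}\supset p^{\mathbb Z}$, every factor $1-bp^n$ is nonzero, so all $d_n=0$ and $D\equiv0$.

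The only delicate point is the holomorphy of $F_{-p}$. One must check that the reflection identity really applies to the $q$-theta with the normalisation \eqref{eq:theta-definition}, and that the terms of the bilateral series are evaluated as the continued $\psi(q^j\sigma;b)$, so that no residual poles arise from $1+bq^j\sigma/p$. The latter is exactly the admissibility condition: with $\sigma=-p$ the denominator is $1-bq^j$, which is nonzero for $b\notin q^{\mathbb Z}$. This is also why the argument fails at $b=p^m$, consistent with Proposition~\ref{prop:exceptional-specialization}.
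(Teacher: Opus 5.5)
Your argument is correct, but after the first step it takes a different route from the paper. Both proofs substitute $\sigma=-p$ into \eqref{eq:convergent-connection} and use $\theta_q(z)=\theta_q(q/z)$ to cancel the theta quotients.

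The paper then evaluates the resulting series explicitly. Grouping $j=2k$ gives \eqref{eq:distinguished-bilateral}. The terms with $k<0$ vanish because $(p^{k+1};p)_\infty$ contains the vanishing factor $1-p^{0}$ while $1-bp^k\neq0$. The identity $(p^{k+1};p)_\infty=(p;p)_\infty/(p;p)_k$ then turns the remaining terms into \eqref{eq:FConvergent}.

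You use only the qualitative outcome of the cancellation, namely holomorphy of $F_{-p}$ on $\Cstar$. You then argue by non-resonance: $D=f-F_{-p}$ satisfies $D(\lambda)=bD(p\lambda)$, so its Laurent coefficients obey $(1-bp^n)d_n=0$. This avoids any coefficient computation and shows structurally why $b=p^m$ is special. There $\sigma=-p$ is still admissible and the same cancellation holds, so your argument shows that $f-F_{-p}$ is a multiple of $\lambda^{-m}$. It cannot determine that multiple, which is why the paper's explicit grouping is reused in Proposition~\ref{prop:exceptional-specialization}.

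The two routes also rest on different inputs. Yours depends on \eqref{eq:F-first-order} and, for $b=0$, on the Ramanujan-specialisation Remark. The paper derives both from the cited Ramanujan--$q$-Airy formula \eqref{eq:ramanujan-connection}. The paper's direct computation needs only the convergence part of Lemma~\ref{lem:discrete-summation}. At $b=0$, that lemma supplies $\mathcal S_{-p}(t;0)=\operatorname{Ai}_q(t)$ independently, so the direct computation in fact reproves \eqref{eq:ramanujan-connection} rather than assuming it.
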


\begin{proof}
Under the stated assumption on $b$, Lemma~\ref{lem:discrete-summation}
makes $\sigma=-p$ admissible. Substitute this choice into
\eqref{eq:convergent-connection}. The identities
$\theta_q(p/t)=\theta_q(t/q)$ and
$\theta_q(-p/t)=\theta_q(-t/q)$ make both theta quotients become $1$,
with removable values understood by continuation in $t$.
The terms with odd $j$ cancel between the two bilateral sums.
Writing $j=2k$ in the remaining terms and using
$\lambda=-q^3/t^2$ gives
\begin{equation}\label{eq:distinguished-bilateral}
  F_{-p}(\lambda;b)
  =\frac{1}{(p;p)_\infty}
    \sum_{k\in\mathbb Z}
    \frac{(pp^{k};p)_\infty}{1-bp^k}
    p^{k^2}(-\lambda)^k.
\end{equation}
The grouping is justified by the normal convergence in
Lemma~\ref{lem:discrete-summation}.
For $k<0$, the product $(pp^{k};p)_\infty$ contains a zero factor,
while $1-bp^k\ne0$ under the stated assumption on $b$.
These terms, therefore, vanish. For $k\geq0$, use
$(pp^{k};p)_\infty=(p;p)_\infty/(p;p)_k$.
The remaining sum is exactly \eqref{eq:FConvergent}.
\end{proof}

For fixed $\lambda$, the defining series for $f$ is meromorphic in
$b$. It therefore supplies a continuation of the generic family
$F_{-p}(\lambda;b)$, which we denote by $\hat{F}_{-p}(\lambda;b)$. Then we may write
\begin{equation}\label{eq:distinguished-continuation}
  \hat{F}_{-p}(\lambda;b)=f(\lambda;b)
\end{equation}
for every allowed $b$.
Here $F_\sigma$ denotes summation at a fixed $b$ as in Theorem \ref{thrm:MeromorphicResummation}; $\hat{F}_{-p}$ denotes
continuation in $b$ after summing with $\sigma=-p$ at generic $b$.
At $b=p^m$, $m\geq1$, these two operations give different values.

\subsection{Cancellation at the exceptional parameters}

At $b=p^m$, $m\geq1$, the Borel transform becomes the entire function
\[
  \psi_m(s)=\prod_{0\leq k\ne m-1}\bigl(1+sp^k\bigr).
\]
Lemma~\ref{lem:discrete-summation} shows that the resulting fixed-parameter sum is independent of $\sigma$.

\begin{proposition}[Exceptional specialisation]
\label{prop:exceptional-specialization}
For every $m\geq1$ and $\sigma\in\Cstar$,
\begin{equation}\label{eq:exceptional-correction}
  F_\sigma(\lambda;p^m)
  =f(\lambda;p^m)
   -p^{m(m+1)/2}(p;p)_{m-1}\lambda^{-m},
  \qquad\lambda\in\Cstar.
\end{equation}
\end{proposition}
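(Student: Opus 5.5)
The plan is to reduce to the distinguished spiral $\sigma=-p$ and then repeat the computation in the proof of Proposition~\ref{prop:distinguished-specialization}. The one difference at $b=p^m$ is a removable singularity that now survives as an extra term.

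\emph{Step 1: reduce to $\sigma=-p$.} At $b=p^m$ with $m\geq1$, the Borel transform $\psi(\cdot;p^m)$ is the entire function $\psi_m$. So every $\sigma\in\Cstar$ is admissible, and by Lemma~\ref{lem:discrete-summation} the sum $\mathcal S_\sigma(t;p^m)$ is the ordinary sum of the convergent series $h$, independent of $\sigma$. Substituting into \eqref{eq:F-symmetric}, $F_\sigma(\lambda;p^m)$ is independent of $\sigma$. It therefore suffices to prove \eqref{eq:exceptional-correction} for $\sigma=-p$.

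\emph{Step 2: rerun the distinguished-spiral computation.} With $\sigma=-p$, the steps in the proof of Proposition~\ref{prop:distinguished-specialization} carry over verbatim. The theta quotients in \eqref{eq:convergent-connection} equal $1$, the odd-$j$ terms cancel, and normal convergence justifies the regrouping. The only change is that each coefficient must now be read as $\psi_m(q^{2k}\sigma)=\psi_m(-p^{k+1})$, i.e.\ after cancelling common factors. This gives
\[
  F_{-p}(\lambda;p^m)=\frac{1}{(p;p)_\infty}\sum_{k\in\mathbb Z}\psi_m(-p^{k+1})\,p^{k^2}(-\lambda)^k .
\]

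\emph{Step 3: identify the coefficients.} For $k\ne-m$ we have $1-p^{m+k}\neq0$, so $\psi_m(-p^{k+1})=(p^{k+1};p)_\infty/(1-p^{m+k})$ with no cancellation. For $k\geq0$ these terms reproduce exactly the series \eqref{eq:FConvergent} at $b=p^m$, which is $f(\lambda;p^m)$. For $k<0$ with $k\neq-m$, the numerator $(p^{k+1};p)_\infty$ contains the zero factor $1-p^0$ and the denominator does not vanish, so these terms are zero.

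\emph{Step 4: the extra term at $k=-m$.} This is the only delicate step. Here the zero of the numerator, the factor $1-p^{1-m+(m-1)}$, is exactly the factor removed in $\psi_m$. Hence
\[
  \psi_m(-p^{1-m})=\prod_{0\le j\ne m-1}\bigl(1-p^{1-m+j}\bigr)
  =\prod_{i=1}^{m-1}\bigl(1-p^{-i}\bigr)\cdot(p;p)_\infty
  =(-1)^{m-1}p^{-m(m-1)/2}(p;p)_{m-1}(p;p)_\infty .
\]
Multiplying by $p^{m^2}(-\lambda)^{-m}/(p;p)_\infty$ gives
\[
  (-1)^{m-1}(-1)^{m}p^{m^2-m(m-1)/2}(p;p)_{m-1}\lambda^{-m}
  =-p^{m(m+1)/2}(p;p)_{m-1}\lambda^{-m}.
\]
Adding this term to $f(\lambda;p^m)$ from Step~3 yields \eqref{eq:exceptional-correction} for all $\lambda\in\Cstar$.

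The main care point is Step~4. The cancelled factor must be taken exactly as prescribed in Lemma~\ref{lem:discrete-summation}, namely continuation in $s$ at fixed $b=p^m$, and not by letting $b\to p^m$. It is precisely this distinction that produces the nonzero correction term rather than reproducing $\hat F_{-p}=f$.
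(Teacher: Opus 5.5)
Your proposal is correct and follows the paper's proof essentially step for step. You reduce to $\sigma=-p$ via the $\sigma$-independence from Lemma~\ref{lem:discrete-summation}, regroup the even-index terms, identify the $k\geq0$ terms with $f(\lambda;p^m)$, and pick up the single surviving negative-index term at $k=-m$, whose value $(-1)^{m-1}p^{-m(m-1)/2}(p;p)_{m-1}(p;p)_\infty$ agrees with the paper's $-(-1)^m p^{-m(m-1)/2}(p;p)_{m-1}(p;p)_\infty$.
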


\begin{proof}
At $b=p^m$, Lemma~\ref{lem:discrete-summation} shows that
$\mathcal S_\sigma$ is independent of $\sigma$; hence
\eqref{eq:F-symmetric} gives the same independence for $F_\sigma$.
We may therefore choose $\sigma=-p$.
Repeating the even-index grouping in the proof of
\eqref{eq:distinguished-bilateral}, now using the entire Borel
transform $\psi_m$, gives
\[
  F_{-p}(\lambda;p^m)
  =\frac{1}{(p;p)_\infty}\sum_{k\in\mathbb Z}
    \psi_m(-pp^{k})p^{k^2}(-\lambda)^k,
\]
where for $k\geq0$,
\[
  \psi_m(-pp^{k})
  =\frac{(pp^{k};p)_\infty}{1-p^{m+k}}
  =\frac{(p;p)_\infty}{(1-p^{m+k})(p;p)_k}.
\]
Thus, the nonnegative-index terms give $f(\lambda;p^m)$ by
\eqref{eq:FConvergent}. We need only compute the negative-index terms.

For $k<0$, the vanishing factor of $(-s;p)_\infty$ at
$s=-p^{k+1}$ has index $-k-1$. This factor is omitted from
$\psi_m$ only when $k=-m$; all other negative-index terms vanish.
At the surviving index, the product splits as
\[
  \psi_m(-p/p^m)
  =(p;p)_\infty\prod_{r=1}^{m-1}(1-1/p^{r})
  =-(-1)^{m}p^{-m(m-1)/2}(p;p)_{m-1}(p;p)_\infty.
\]
Adding this contribution to the nonnegative-index terms proves
\eqref{eq:exceptional-correction}.
\end{proof}

The difference between continuation in \(b\) and summation at fixed \(b=p^m\) can be seen directly in the \(k=-m\) term of the distinguished bilateral representation. The factor $(p/p^{m};p)_\infty/(1-b/p^{m})$ is zero for every \(b\ne p^m\), so its continuation in \(b\) is also zero at \(b=p^m\): removing the nonzero value \(\psi_m(-p/p^{m})\) whose contribution is the correction calculated in Proposition 5.3.

The correction is a constant multiple of \(1/\lambda^{m}\), satisfying the homogeneous part of \eqref{eq:first-order-extension} at \(b=p^m\); adding it preserves the first-order equation satisfied by \(f\) and \(F_\sigma\). Corollary 4.3 also shows that the correction contributes no terms to the asymptotic expansion measured relative to \(W(t)\). Consequently, \(f\) and \(F_\sigma\) have the same theta-weighted asymptotic expansion despite being different functions.

At these parameters, \(h\) converges and its ordinary sum equals \(\mathcal S_\sigma\). Substituting these convergent sums into the formal connection expression therefore gives \(F_\sigma\), which differs from \(f\) according to Proposition 5.3.

\section*{Summary}

We investigated a one-parameter deformation of the Ramanujan entire function, in particular, an extension of the Ramanujan--$q$-Airy connection formula.

A Borel transformation simplifies the formal connection problem; the corresponding Laplace contour inversion associates $f$ with a distinct combination of divergent series at infinity. Discrete $q$-Borel--Laplace summation produces a class of meromorphic solutions; explicit remainder estimates establish that these, and the entire function $f$, are indeed asymptotic to the divergent formal solution.
A distinguished summation spiral recovers $f$ for generic $b$, and continuation in $b$ extends this identity to every allowed parameter value.

As the summation spiral changes, the resulting meromorphic functions differ by solutions of the homogeneous part of the first-order equation \eqref{eq:first-order-extension}. These homogeneous solutions also
account for the correction at exceptional parameters $b=p^m$, $m\geq1$. In these cases, summation at
fixed $b$ and continuation from generic $b$ give different meromorphic functions despite
convergence of the formal series \(h\) near $\lambda=\infty$; these differences are invisible relative to the shared asymptotic expansion. At $b=0$, the
construction recovers the Ramanujan--$q$-Airy connection formula.

\printbibliography

\appendix
\section{Proof of Lemma~\ref{lem:discrete-summation}}\label{app:discrete-summation}

We retain the notation and parameter assumptions of
Lemma~\ref{lem:discrete-summation}. All sums are taken at a fixed $b$;
constants may depend on $q$, $b$, $\sigma$, and the indicated domain.

\begin{proof}
We first identify the Borel transform and establish convergence of
\eqref{eq:S-bilateral}. We then prove the first-order relation
\eqref{eq:S-first-order} and the asymptotic estimate
\eqref{eq:S-asymptotic}. The decomposition used for the latter also
determines when the formal series $h$ converges.

\emph{Borel transform and convergence.}
Multiply \eqref{eq:ck-first-order}, with $k=n$, by
$p^{n(n-1)/2}s^n$, sum over $n\geq1$, and use $c_0=1$. This gives
\begin{equation*}
  (1+bs/p)\sum_{n=0}^{\infty}p^{n(n-1)/2}c_ns^n
  =\sum_{n=0}^{\infty}\frac{1}{(p;p)_n}p^{n(n-1)/2}s^n
  =(-s;p)_\infty.
\end{equation*}
Since $p=q^2$, the power series multiplying $1+bs/p$ is
$\mathcal B_q^+h$.
Division by $1+bs/p$ identifies it with the germ of $\psi$ in
\eqref{eq:psi-product}.
Applying \eqref{eq:discrete-laplace-definition} and the theta shift
identity gives \eqref{eq:S-bilateral}.

We now prove normal convergence of the bilateral series in
\eqref{eq:S-bilateral}. Fix a compact set $T\subset\Cstar$.
As $j\to+\infty$, $\psi(q^j\sigma;b)\to1$, while
$q^{j(j-1)/2}(\sigma/t)^j$ decays at a Gaussian rate, uniformly
for $t\in T$. This controls the positive tail.

For the negative tail, write $j=-m$, with $m\geq0$. The product identity
\begin{equation*}
  (-a/p^M;p)_\infty
  =a^Mp^{-M(M+1)/2}(-p/a;p)_M(-a;p)_\infty,
  \qquad a\ne0,\quad M\geq0,
\end{equation*}
applied with base $|p|$ and $a=|\sigma|$ or $|\sigma/q|$, according
as $m=2M$ or $m=2M+1$, gives
\begin{equation*}
  \prod_{k=0}^{\infty}(1+|\sigma/q^m|\,|p|^k)
  \leq CB^m|q|^{-m^2/4},\qquad m\geq0.
\end{equation*}
Here $C,B>0$ depend only on $q$ and $\sigma$; factors exponential in
$m$ have been absorbed into $B^m$. This positive product bounds
$|(-\sigma/q^m;p)_\infty|$, including when any one factor is omitted.

For $b\notin\{0,p,p^2,\ldots\}$, the function $\psi(s;b)$ has a
single pole at $s=-p/b$, which the admissible sampling spiral
$\sigma q^{\mathbb Z}$ avoids. The sequence
$1/(1+bq^j\sigma/p)$ is therefore bounded: every term is finite,
and its limits as $j\to+\infty$ and $j\to-\infty$ are $1$ and $0$,
respectively. For $b=p^{k+1}$, $k\geq0$, the denominator instead
cancels the factor $1+sp^k$ in $(-s;p)_\infty$, so the continued
function $\psi(s;b)$ is the product with that factor omitted.
For $b=0$, it is the full product. Thus, for every allowed $b$,
after enlarging $C$ to depend also on $b$,
\begin{equation*}
  |\psi(\sigma/q^{m};b)|\leq CB^m|q|^{-m^2/4},\qquad m\geq0.
\end{equation*}
Multiplying by the remaining factors of the summand gives
\begin{equation*}
  \sup_{t\in T}
  |\psi(\sigma/q^{m};b)q^{m(m+1)/2}(t/\sigma)^m|
  \leq CB^m|q|^{m^2/4},\qquad m\geq0,
\end{equation*}
where $C,B$ may now depend on $T$ but remain independent of $m$.
This summable bound controls the negative tail and completes the
proof of the normal convergence of the bilateral series in
\eqref{eq:S-bilateral}. Dividing its sum by
$(q;q)_\infty\theta_q(-\sigma/t)$ gives the asserted meromorphic
function, with possible poles only on $-\sigma q^{\mathbb Z}$.

\emph{Bounds for Laplace inversion.}
We next establish bounds for termwise Laplace inversion and for the
remainders on the asymptotic domains of
Section~\ref{sec:preliminaries}. For $t\notin-\sigma q^{\mathbb Z}$, put
\begin{equation}\label{eq:absolute-kernel}
  \kappa_\sigma(t):=
  \sum_{j\in\mathbb Z}
  |(q;q)_\infty\theta_q(-q^j\sigma/t)|^{-1}.
\end{equation}
This is the sum of the absolute values of the Laplace weights. For
$\phi$ bounded on the sampling spiral, it gives
\begin{equation*}
  |\mathcal L_{q;1}^{[\sigma]}\phi(t)|
  \leq\kappa_\sigma(t)\sup_{j\in\mathbb Z}|\phi(q^j\sigma)|.
\end{equation*}
The theta shift identity gives normal convergence of the series in
\eqref{eq:absolute-kernel} on compact sets avoiding
$-\sigma q^{\mathbb Z}$. Reindexing gives
$\kappa_\sigma(q^l t)=\kappa_\sigma(t)$ for every $l\in\mathbb Z$.
Thus, a bound on the compact set $K$ from
Section~\ref{sec:preliminaries} extends to all its $q$-shifts:
\begin{equation*}
  \sup_{u\in K,l\in\mathbb Z}\kappa_\sigma(q^l u)
  =\sup_{u\in K}\kappa_\sigma(u)<\infty.
\end{equation*}

For $n\geq0$, the theta shift identity also gives
\begin{equation*}
  \frac{s^n}{\theta_q(-s/t)}
  =q^{-n(n-1)/2}t^n
    \frac{1}{\theta_q(-s/(t/q^n))}.
\end{equation*}
Taking absolute values, setting $s=q^j\sigma$, and summing over $j$
gives the absolute moment formula, using
$\kappa_\sigma(t/q^n)=\kappa_\sigma(t)$:
\begin{equation}\label{eq:absolute-moments}
  \sum_{j\in\mathbb Z}
  \frac{|q^j\sigma|^n}
       {|(q;q)_\infty\theta_q(-q^j\sigma/t)|}
  =|q|^{-n(n-1)/2}|t|^n\kappa_\sigma(t),\qquad n\geq0.
\end{equation}
Multiplying the same pointwise identity, with $n=N$, by
$\phi(s)/(q;q)_\infty$ and summing over $s=q^j\sigma$ gives
\begin{equation}\label{eq:laplace-multiplier}
  \mathcal L_{q;1}^{[\sigma]}(s^N\phi)(t)
  =q^{-N(N-1)/2}t^N
    \mathcal L_{q;1}^{[\sigma]}\phi(t/q^N),\qquad N\geq0,
\end{equation}
whenever these sums converge absolutely. We will use
\eqref{eq:absolute-moments} to justify termwise inversion and
\eqref{eq:laplace-multiplier} to invert the equation for $\psi$ and
estimate the pole contribution.

\emph{The first-order equation.}
By \eqref{eq:psi-product}, the Borel transform satisfies
$(1+bs/p)\psi(s;b)=(-s;p)_\infty$. To invert this relation, first
apply \eqref{eq:absolute-moments} to Euler's expansion of
$(-s;p)_\infty$. The absolute double sum is bounded by
\begin{equation*}
  \kappa_\sigma(t)\sum_{n=0}^{\infty}
  \frac{1}{|(p;p)_n|}|q|^{n(n-1)/2}|t|^n<\infty,
  \qquad t\notin-\sigma q^{\mathbb Z},
\end{equation*}
since the reciprocals of $(p;p)_n$ are bounded. Termwise inversion
is therefore justified, and \eqref{eq:laplace-moments} gives
\begin{align*}
  \mathcal L_{q;1}^{[\sigma]}[(-s;p)_\infty](t)
  &=\sum_{n=0}^{\infty}
    \frac{p^{n(n-1)/2}}{(p;p)_n}q^{-n(n-1)/2}t^n=\sum_{n=0}^{\infty}\frac{q^{n(n-1)/2}t^n}{(p;p)_n}
    =\operatorname{Ai}_q(t),
\end{align*}
using $p=q^2$ and \eqref{eq:q-airy-definition}.
The multiplier identity \eqref{eq:laplace-multiplier} with $N=1$
then yields
\begin{equation*}
  \mathcal L_{q;1}^{[\sigma]}[s\psi(s;b)](t)
  =t\,\mathcal S_\sigma(t/q;b).
\end{equation*}
The sum on the left converges absolutely because
\eqref{eq:S-bilateral} does at $t/q$. Applying the Laplace transform
to $\psi(s;b)+(b/p)s\psi(s;b)=(-s;p)_\infty$ now gives
\begin{equation*}
  \mathcal S_\sigma(t;b)+(bt/p)\mathcal S_\sigma(t/q;b)
  =\operatorname{Ai}_q(t).
\end{equation*}
This proves \eqref{eq:S-first-order} as an identity of meromorphic
functions.

\emph{The entire contribution.}
It remains to prove \eqref{eq:S-asymptotic} and determine when $h$
converges. For $b=0$, \eqref{eq:S-first-order} gives
$\mathcal S_\sigma(t;0)=\operatorname{Ai}_q(t)$, and
\eqref{eq:ck-first-order} identifies $h$ with its entire Taylor
series. Ordinary Taylor estimates give \eqref{eq:S-asymptotic} in
this case. Assume $b\ne0$ for the rest of the proof.

We separate $\psi$ into a simple-pole term and an entire remainder: we will invert the remainder term by term, then estimate the pole
term. Write
\begin{equation}\label{eq:psi-pole-decomposition}
  \begin{aligned}
    \psi(s;b)&=\frac{(p/b;p)_\infty}{1+bs/p}
                  +\psi_{\mathrm{ent}}(s;b),\qquad\psi_{\mathrm{ent}}(s;b):=
      \frac{(-s;p)_\infty-(p/b;p)_\infty}{1+bs/p},
  \end{aligned}
\end{equation}
so that $\psi_{\mathrm{ent}}$ is entire. For the fixed value of $b$, write
$\psi_{\mathrm{ent}}(s;b)=\sum_{n\geq0}d_ns^n$.
Expanding $(-s;p)_\infty$ by Euler's identity and using the definition \eqref{eq:psi-pole-decomposition} gives
\begin{equation*}
  d_n=-\sum_{k=0}^{\infty}
    \frac{p^{(n+1+k)(n+k)/2}(-p/b)^{k+1}}{(p;p)_{n+1+k}}.
\end{equation*}
The reciprocals of $(p;p)_j$ are uniformly bounded. Since
\begin{align*}
    \sum_{k\geq0}|p|^{k(k+1)/2}|p/b|^{k+1}<\infty,
\end{align*}
it follows that
$|d_n|\leq C|p|^{n(n+1)/2}$, where $C>0$ depends on the fixed
parameters $q,b$ but not on $n$. After the coefficient change for
Laplace inversion, this gives $|d_nq^{-n(n-1)/2}|\leq C|q|^{n(n+3)/2}$ so the resulting power series is entire. Moreover,
\eqref{eq:absolute-moments} bounds the absolute double sum obtained
by inserting the Taylor series of $\psi_{\mathrm{ent}}$ into its
Laplace transform by
\begin{equation*}
  \kappa_\sigma(t)\sum_{n=0}^{\infty}
       |d_n|\,|q|^{-n(n-1)/2}|t|^n<\infty,
  \qquad t\notin-\sigma q^{\mathbb Z}.
\end{equation*}
Interchanging sums and using \eqref{eq:laplace-moments} is therefore
justified and gives
\begin{equation}\label{eq:entire-laplace-sum}
  \mathcal L_{q;1}^{[\sigma]}\psi_{\mathrm{ent}}(t;b)
  =\sum_{n=0}^{\infty}d_nq^{-n(n-1)/2}t^n.
\end{equation}
Thus, this contribution is entire and independent of $\sigma$, with
any apparent poles of its Laplace representation removed.

Comparing Taylor coefficients in \eqref{eq:psi-pole-decomposition},
using $\mathcal B_q^+h=\psi$, gives
\begin{equation*}
  p^{n(n-1)/2}c_n=(p/b;p)_\infty(-b/p)^n+d_n,
  \qquad n\geq0.
\end{equation*}
Thus, termwise inversion of the decomposition recovers precisely $h$.

\emph{The pole contribution and the asymptotic estimate.}
If $(p/b;p)_\infty\ne0$, admissibility ensures that the sampled spiral
avoids $-p/b$. Write $P_\sigma$ for the sum of the simple-pole factor:
\begin{equation*}
  P_\sigma(t;b):=
  \mathcal L_{q;1}^{[\sigma]}(1+bs/p)^{-1}(t).
\end{equation*}
By linearity, the decomposition \eqref{eq:psi-pole-decomposition}
then gives
\begin{equation*}
  \mathcal S_\sigma(t;b)
  =(p/b;p)_\infty P_\sigma(t;b)
    +\sum_{n=0}^{\infty}d_nq^{-n(n-1)/2}t^n.
\end{equation*}
Expand $1/(1+bs/p)$ through degree $N-1$ and apply
\eqref{eq:laplace-multiplier} to the geometric remainder. This gives
\begin{equation}\label{eq:Euler-exact-remainder}
  \begin{aligned}
    P_\sigma(t;b)&-\sum_{n=0}^{N-1}(-b/p)^nq^{-n(n-1)/2}t^n=(-b/p)^Nq^{-N(N-1)/2}t^N P_\sigma(t/q^N;b).
  \end{aligned}
\end{equation}
The boundedness of $1/(1+bq^j\sigma/p)$ on the sampling spiral
and \eqref{eq:absolute-kernel} give
\begin{equation*}
  |P_\sigma(t/q^N;b)|\leq C\kappa_\sigma(t/q^N)
  =C\kappa_\sigma(t).
\end{equation*}
This is uniform in $N$ and in $t$ on each closed asymptotic domain
avoiding $-\sigma q^{\mathbb Z}$. The remainder in
\eqref{eq:Euler-exact-remainder} is therefore bounded by
$C|b/p|^N|q|^{-N(N-1)/2}|t|^N$.
The entire series in \eqref{eq:entire-laplace-sum} has an ordinary
Taylor remainder bound, which also satisfies the $q$-Gevrey estimate
since $|q|^{-N(N-1)/2}\geq1$. Combining the two bounds gives
\eqref{eq:S-asymptotic}, with formal series $h$ as identified by the
coefficient comparison above.

\emph{Convergence of $h$.}
If $(p/b;p)_\infty=0$, equivalently $b=p^{k+1}$ for some $k\geq0$,
the pole term in \eqref{eq:psi-pole-decomposition} vanishes. Both $h$
and its Laplace sum $\mathcal S_\sigma$ then equal the entire series
in \eqref{eq:entire-laplace-sum}. Thus, the sum is independent of
$\sigma$, its apparent poles in $t$ are removable, and ordinary
Taylor estimates give \eqref{eq:S-asymptotic}.

If $(p/b;p)_\infty\ne0$, the pole term contributes coefficients
\begin{align*}
    (p/b;p)_\infty(-b/p)^nq^{-n(n-1)/2},
\end{align*}
to $h$. The formal series with
these coefficients has a radius of convergence of zero. Adding the entire
series in \eqref{eq:entire-laplace-sum} cannot change that radius.
Together with the case $b=0$ treated earlier, this proves the stated
convergence classification and completes the proof.
\end{proof}

\section{A formal continuum limit}\label{sec:continuum}

The connection with Airy functions is also visible in a formal
differential limit of \eqref{eq:homogeneous-equation} near $b=1/2$.
In this scaling, solutions of the limiting equation are integrals of
exponentially weighted Airy functions.

We achieve this continuum limit by first setting $\lambda_n=p^n/4$ and
$f_n=f(\lambda_n;b)$. For $b\ne0$, write $f_n=g_n/b^{n}$.
Equation~\eqref{eq:homogeneous-equation} becomes
\[
  \begin{aligned}
    -bg_n&+(1+b)g_{n+1}-(1+p\lambda_n/b)g_{n+2}
      +(p\lambda_n/b)g_{n+3}=0.
  \end{aligned}
\]
Substituting $g_{n+k}=r^kg_n$
gives the characteristic polynomial
\[
  (r-1)(p\lambda_n(r/b)^2-r/b+1).
\]
At $p=1$ and $\lambda_n=1/4$, the quadratic factor has a double root
$r=2b$. All three roots therefore coalesce at $r=1$ when $b=1/2$.

To retain a parameter in the transition to this value, set
\[
  p=1-\varepsilon^3,\qquad
  b=1/(2+\beta\varepsilon),\qquad
  x=n\varepsilon,\qquad g_n=y_\varepsilon(x),
\]
where $\varepsilon\to0^+$, $\beta\in\mathbb C$ is fixed, and $x$
lies in a fixed compact set. The recurrence rearranges exactly as
\begin{align*}
  &g_{n+3}-3g_{n+2}+3g_{n+1}-g_n=\beta\varepsilon(g_{n+2}-g_{n+1})
    +(1-p\lambda_n/b^2)(g_{n+3}-g_{n+2}),
\end{align*}
where
\[
  1-p\lambda_n/b^2
  =-\beta\varepsilon+(x-\beta^2/4)\varepsilon^2
   +\mathcal{O}(\varepsilon^3).
\]
Assuming a smooth formal expansion
$y_\varepsilon(x)\sim y(x)+\varepsilon y_1(x)+\cdots$, with
corresponding expansions for the derivatives, comparison at order
$\varepsilon^3$ yields
\begin{equation*}
  y'''(x)+\beta y''(x)-(x-\beta^2/4)y'(x)=0.
\end{equation*}
The substitution $y'(x)=e^{-\beta x/2}v(x)$ reduces this equation to
$v''(x)=xv(x)$. Hence, its solutions have the form
\[
  y(x)=C_0+\int^x e^{-\beta s/2}
       \bigl\{C_1\operatorname{Ai}(s)+C_2\operatorname{Bi}(s)\bigr\}\,ds.
\]

For fixed nonzero $b\ne1/2$, the additional root remains separated
from the coalescing Airy pair. The parameter $\beta$ describes the
transition regime, with ordinary Airy integrals at $\beta=0$.
The reduction to an Airy equation for an exponentially rescaled
derivative mirrors the reduction
\eqref{eq:first-order-extension}.

\end{document}